\documentclass[10pt,a4paper]{amsart}

\usepackage{amssymb,amsmath,amsfonts}

\usepackage{graphicx}

\usepackage{mathpazo}
\usepackage[hyperindex,pageanchor]{hyperref}

\textwidth=15cm 
\textheight=22cm 
\topmargin=0.00cm
\oddsidemargin=0.00cm 
\evensidemargin=0.00cm 
\headheight=14.4pt
\headsep=1cm 
\numberwithin{equation}{section}
\hyphenation{semi-stable} 
\emergencystretch=10pt

\newtheorem{theorem}{Theorem}[section]
\newtheorem{lemma}[theorem]{Lemma}
\newtheorem{proposition}[theorem]{Proposition}
\newtheorem{corollary}[theorem]{Corollary}

\theoremstyle{definition}
\newtheorem{definition}[theorem]{Definition}
\theoremstyle{remark}
\newtheorem{remark}[theorem]{Remark}

\newcommand{\Ass}{\operatorname{Ass}}

\newcommand{\grade}{\operatorname{grade}}
\newcommand{\height}{\operatorname{height}}

\newcommand{\Spec}{\operatorname{Spec}}

\newcommand{\cd}{\operatorname{cd}}

\newcommand{\Ext}{\operatorname{Ext}}
\newcommand{\End}{\operatorname{End}}
\newcommand{\Supp}{\operatorname{Supp}}

\newcommand{\Hom}{\operatorname{Hom}}

\newcommand{\Ann}{\operatorname{Ann}}
\newcommand{\Rad}{\operatorname{Rad}}

\newcommand{\depth}{\operatorname{depth}}

\begin{document}

\author[Eghbali ]{Majid Eghbali}
\author[Schenzel]{Peter Schenzel}

\title[Endomorphisms and local cohomology]
{On an endomorphism ring of local cohomology}

\address{Martin-Luther-Universit\"at Halle-Wittenberg, Institut f\"ur Informatik, D -- 06099 Halle (Saale), 
 Germany.}
\email{m.eghbali@yahoo.com}
\email{peter.schenzel@informatik.uni-halle.de}

\subjclass[2000]{13D45, 13C14.}

\keywords{local cohomology module, Hartshorne-Lichtenbaum vanishing theorem, endomorphism ring}

\begin{abstract} Let $I$ be an ideal of a local ring $(R,\mathfrak m)$ 
with $d = \dim R.$ For the local cohomology module $H^i_I(R)$ it is a well-known fact that it vanishes for $i > d$ and is an Artinian 
$R$-module for $i = d.$ In the case that the Hartshorne-Lichtenbaum Vanishing 
Theorem fails, that is $H^d_I(R) \not= 0,$ we explore its fine structure. 
In particular, we investigate its endomorphism ring and related connectedness 
properties. In the case $R$ is complete we prove - as a technical tool - 
that $H^d_I(R) \simeq H^d_{\mathfrak m}(R/J)$ for a certain ideal $J 
\subset R.$ Thus, properties of $H^d_I(R)$ and its Matlis dual might be 
described in terms of the local cohomology supported in the maximal ideal. 
\end{abstract}

\maketitle

\section{Introduction}
Let $I \subset R$ denote an ideal of a local ring $(R, \mathfrak m).$ Let $M$ be a finitely generated 
$R$-module with $d = \dim M.$   For an integer $i \in \mathbb Z$ let $H^i_I(M)$ denote the $i$-th 
local cohomology module of $M$ with respect to $I$ as introduced by Grothendieck (see \cite{G} and \cite{Br-Sh}). 
Of a particular interest are the first non-vanishing (resp. the last non-vanishing) cohomological degree 
of the local cohomology modules $H^i_I(M),$ known as the grade $\grade (I,M)$ (resp. cohomological 
dimension $\cd (I,M)$). It is a well-known fact that 
\[
\grade (I,M) \leq \cd (I,M) \leq \dim M.
\]
In the case of $I = \mathfrak m$ it follows that $\cd (\mathfrak m, M) = \dim M$ 
(see \cite{G}). While for an arbitrary 
ideal $I \subset R$ the Hartshorne-Lichtenbaum Vanishing Theorem says that the following conditions are equivalent: 
\begin{itemize}
\item[(1)] $H^d_I(M) = 0.$ 
\item[(2)] $\dim \hat{R}/I\hat{R} +\mathfrak p > 0 \text{ for all }  \mathfrak p 
\in \Ass_{\hat{R}} \hat{M} \text{ such that } \dim \hat{R}/\mathfrak p = d.$
\end{itemize}
(see \cite{H} and \cite{Br-Sh}). Here $\hat{M}$ resp. $\hat{R}$ denotes the completion of $M$ resp. $R.$  
Moreover, it follows that $H^d_I(M)$ is an Artinian $R$-module (see 
\cite{H} and \cite{S1}).  Furthermore, there is an explicit description of the Artinian $R$-module $H^d_I(M)$ 
by its secondary decomposition and its attached prime ideals (see \cite[Section 3]{D-Sch}). 
Let $E_R(R/\mathfrak{m})$ denote the injective hull of the residue field.
Then $\Hom_R(H^d_I(M),E_R(R/\mathfrak{m}))$ is a finitely generated $\hat{R}$-module. One of 
our interest is to study the properties of it. 

In recent research there is an interest in endomorphism rings of certain local cohomology 
modules $H^i_I(R).$ This was done in the case of $i = \dim R$ and $I = \mathfrak m$ by Hochster 
and Huneke (see \cite{HH}) and in the case of $i = \height I$ and $R$ a Gorenstein ring (see \cite{Sch5} and the 
references there). Here we continue with the case of $i = \dim R$ and an arbitrary ideal $I \subset R.$ In particular 
we investigate the natural ring homomorphism 
\[
\hat{R} \to \Hom_{\hat{R}}(H^d_I(R), H^d_I(R)), \; d = \dim R.
\]
We describe its kernel, characterize when it is an isomorphism, prove that the endomorphism ring 
$\Hom_{\hat{R}}(H^d_I(R), H^d_I(R))$ is commutative and decide when it is a local Noetherian ring. Note that 
in the case of $I = \mathfrak{m}$ we recover results shown by Hochster and Huneke (see \cite{HH}). In fact, 
we use their result in our proof.

\begin{theorem} \label{0.1} Let $I$ denote an ideal of a complete local 
ring $(R,\mathfrak m).$ Let $J = \Ann_R H^d_I(R)$ where $d = \dim R.$ 
\begin{itemize} 
  \item[(a)] The endomorphism ring $\Hom_R(H^d_I(R), H^d_I(R))$ is a 
 commutative semi-local ring, finitely generated as $R$-module. 
  \item[(b)] The natural homomorphism $R/J \to \Hom_R(H^d_I(R), H^d_I(R))$ 
  is an isomorphism if and only if $R/J$ satisfies the condition $S_2.$ 
\end{itemize} 
\end{theorem}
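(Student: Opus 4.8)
The plan is to reduce both assertions to statements about the endomorphism ring of the canonical module of $S := R/J$, and then to invoke known structural results, in particular the theorem of Hochster and Huneke \cite{HH}. First I would dispose of the trivial case: if $H^d_I(R) = 0$ then $J = R$ and both claims hold vacuously, so assume $H^d_I(R) \neq 0$. By the technical tool proved in the paper, $H^d_I(R) \simeq H^d_{\mathfrak m}(S)$ with $S = R/J$ complete local; since this module is nonzero and local cohomology vanishes above the dimension, while $\dim S \le \dim R = d$, we get $\dim S = d$. Write $D(-) = \Hom_R(-, E_R(R/\mathfrak m))$ for Matlis duality; restricted to $S$-modules it is Matlis duality over $S$, and local duality over the complete local ring $S$ gives $D(H^d_{\mathfrak m}(S)) \cong K_S$, the canonical module of $S$. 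As $H^d_{\mathfrak m}(S) \simeq H^d_I(R)$ has $R$-annihilator exactly $J$, it is a faithful $S$-module, hence so is $K_S$; by the known description of $\Att$ and $\Ann$ of top local cohomology (cf. \cite{D-Sch}) this forces $S$ to be equidimensional of dimension $d$ and without embedded primes. Finally, every $R$-linear endomorphism of an $S$-module is $S$-linear, so
\[
\Hom_R(H^d_I(R), H^d_I(R)) \;\cong\; \Hom_S(H^d_{\mathfrak m}(S), H^d_{\mathfrak m}(S)) \;\cong\; \Hom_S(K_S, K_S),
\]
the second isomorphism obtained by applying $D$ (a priori a ring anti-isomorphism, hence an isomorphism once either side is known to be commutative); moreover all these isomorphisms identify the natural homothety map of $R/J$ with the homothety map $S \to \Hom_S(K_S, K_S)$.

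With this reduction, part (a) is quick. The ring $\Hom_S(K_S, K_S)$ is finitely generated as an $S$-module because $K_S$ is finitely generated and $S$ is Noetherian, and a ring module-finite over a local ring is semi-local. For commutativity I would invoke the Hochster--Huneke theorem \cite{HH} applied to the complete local ring $S$; alternatively one argues directly, using that $K_S$ satisfies Serre's condition $S_2$, hence so does $\Hom_S(K_S, K_S)$, so that its associated primes lie among the minimal primes of $S$, and localization at such a prime $\mathfrak p$ embeds $\Hom_S(K_S, K_S)$ into a finite product of commutative Artinian rings $\Hom_{S_{\mathfrak p}}(K_{S_{\mathfrak p}}, K_{S_{\mathfrak p}}) \cong \Hom_{S_{\mathfrak p}}(E_{S_{\mathfrak p}}, E_{S_{\mathfrak p}}) \cong S_{\mathfrak p}$. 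Transporting back along the isomorphisms above yields (a).

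For part (b), the natural map $R/J \to \Hom_R(H^d_I(R), H^d_I(R))$ corresponds to the homothety map $S \to \Hom_S(K_S, K_S)$, which is injective since its kernel is $\Ann_S K_S = 0$ by faithfulness. If this map is an isomorphism, then $S$ satisfies $S_2$ because $\Hom_S(K_S, K_S)$ always does. Conversely, if $S$ satisfies $S_2$, then --- $S$ being complete (hence universally catenary) and equidimensional without embedded primes --- the classical identification of $\Hom_S(K_S, K_S)$ with the $S_2$-ification of $S$ (a theorem of Aoyama; cf. \cite{Sch5} and the references there) shows that the injection $S \hookrightarrow \Hom_S(K_S, K_S)$ is onto, i.e. the map is bijective. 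This proves (b).

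I expect the main difficulty to lie not in any single computation but in the bookkeeping: checking that the technical tool, local duality, and Matlis duality all intertwine the relevant homothety maps, and that the hypotheses of the Hochster--Huneke theorem and of Aoyama's $S_2$-criterion are genuinely met --- above all the equidimensionality (indeed unmixedness) of $R/J$, which is exactly what makes the homothety map injective and the $S_2$-criterion applicable. Once these points are settled, both parts of the theorem follow by transport of structure.
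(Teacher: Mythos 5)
Your proposal follows essentially the same route as the paper: reduce via the isomorphism $H^d_I(R)\simeq H^d_{\mathfrak m}(R/Q_I(R))$ and Matlis/local duality to the endomorphism ring of the canonical module $K_{R/Q_I(R)}$, then quote Aoyama, Aoyama--G\^oto and Hochster--Huneke. The only point to tighten is that the technical tool produces the ideal $Q_I(R)$, not a priori $J=\Ann_R H^d_I(R)$; one must first note $\Ann_R H^d_{\mathfrak m}(R/Q_I(R))=\Ann_R K_{R/Q_I(R)}=(Q_I(R))_d=Q_I(R)$ (using that $Q_I(R)$ is unmixed of dimension $d$ by construction) before writing $S=R/J$, rather than deducing unmixedness of $S$ afterwards.
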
   

Moreover, we describe the  ideal $J$ explicitly in terms of $R$ and $I.$ 
Of course this is only of an interest whenever $H^d_I(R) \not= 0.$ That is, when the ideal $I \subset R$ 
does not satisfy condition (2) above. As an important step towards the proof of Theorem 
\ref{0.1} we prove the following result:

\begin{theorem} \label{1.1} Let $(R,\mathfrak{m})$ denote a complete local ring. Let $I \subset R$ denote 
an ideal. Let $M$ be a  finitely generated $R$-module  with $d = \dim M.$ Then there is an ideal 
$J \subset R$ such that 
\[
H^d_I(M) \simeq H^d_{\mathfrak{m}}(M/JM) \simeq H^d_{\mathfrak{m}}(M)/JH^d_{\mathfrak{m}}(M).
\]
The ideal $J$ is described explicitly.
\end{theorem}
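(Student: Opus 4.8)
The plan is to produce an explicit ideal $J$ and to obtain the asserted isomorphism as a chain
\[
H^d_I(M)\;\cong\;H^d_I(M/N)\;=\;H^d_{\mathfrak m}(M/N)\;\cong\;H^d_{\mathfrak m}(M/JM)\;\cong\;H^d_{\mathfrak m}(M)/JH^d_{\mathfrak m}(M),
\]
where $N\subseteq M$ is a suitable submodule and $J=(N:_R M)$. To define $N$, fix a minimal primary decomposition $0=\bigcap_jQ_j$ of the zero submodule of $M$, with $Q_j$ being $\mathfrak p_j$-primary, and call $\mathfrak p_j$ \emph{relevant} if $\dim R/\mathfrak p_j=d$ and $I+\mathfrak p_j$ is $\mathfrak m$-primary. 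A relevant prime is necessarily minimal in $\Ass_R M$, so its primary component equals $\ker(M\to M_{\mathfrak p_j})$ and is intrinsic; hence $N:=\bigcap_{\mathfrak p_j\text{ relevant}}Q_j$ and $J:=(N:_RM)$ depend only on $M$ and $I$, not on the chosen decomposition. (If there is no relevant prime, then $N=M$, $J=R$, and every module in the chain vanishes, in agreement with the Hartshorne--Lichtenbaum theorem; so assume there is at least one relevant prime.)

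First I would prove $H^d_I(M)\cong H^d_I(M/N)$. Since $N\cap\bigcap_{\mathfrak p_j\text{ not relevant}}Q_j=0$, the module $N$ embeds into $M/\bigcap_{\mathfrak p_j\text{ not relevant}}Q_j$, so $\Ass_R N$ consists only of non-relevant primes. Consequently $H^d_I(N)=0$: this is automatic if $\dim N<d$, and if $\dim N=d$ then every $\mathfrak p\in\Ass_R N$ with $\dim R/\mathfrak p=d$, being non-relevant, has $\dim R/(I+\mathfrak p)>0$, so the Hartshorne--Lichtenbaum Vanishing Theorem applied to $N$ gives it. Because $\dim N\le d$ also forces $H^{d+1}_I(N)=0$, the long exact sequence of $0\to N\to M\to M/N\to 0$ yields the isomorphism.

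Next I would treat the middle isomorphisms. One has $V(I+\Ann_R(M/N))=V(I)\cap\Supp_R(M/N)=\bigcup_{\mathfrak p_j\text{ relevant}}V(I+\mathfrak p_j)=\{\mathfrak m\}$, so $I+\Ann_R(M/N)$ is $\mathfrak m$-primary; as local cohomology of a module depends only on the radical of the ideal together with the annihilator, $H^d_I(M/N)=H^d_{\mathfrak m}(M/N)$. To replace $M/N$ by $M/JM$ I would show $\dim(N/JM)<d$. Any prime in $\Supp_R(N/JM)$ lies in $V(J)=\bigcup_{\mathfrak p_j\text{ relevant}}V(\mathfrak p_j)$, so a prime of dimension $d$ there must be one of the (minimal, maximal-dimensional) primes $\mathfrak p_{j_0}$; localizing $N=\bigcap_{\mathfrak p_j\text{ relevant}}Q_j$ and $0=\bigcap_jQ_j$ at $\mathfrak p_{j_0}$ collapses all components whose prime is not contained in $\mathfrak p_{j_0}$ to $M_{\mathfrak p_{j_0}}$ and leaves $(Q_{j_0})_{\mathfrak p_{j_0}}$ as the only other one, whence $N_{\mathfrak p_{j_0}}=(Q_{j_0})_{\mathfrak p_{j_0}}=0$ and $\mathfrak p_{j_0}\notin\Supp_R(N/JM)$, a contradiction. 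Thus $H^d_{\mathfrak m}(N/JM)=0=H^{d+1}_{\mathfrak m}(N/JM)$, and the sequence $0\to N/JM\to M/JM\to M/N\to 0$ gives $H^d_{\mathfrak m}(M/JM)\cong H^d_{\mathfrak m}(M/N)$. For the last isomorphism, apply $H^d_{\mathfrak m}(-)$ to $0\to JM\to M\to M/JM\to 0$; since $H^{d+1}_{\mathfrak m}(JM)=0$ this gives $H^d_{\mathfrak m}(M/JM)\cong H^d_{\mathfrak m}(M)/\im\bigl(H^d_{\mathfrak m}(JM)\to H^d_{\mathfrak m}(M)\bigr)$, and writing $J=(a_1,\dots,a_s)$ one identifies this image with $JH^d_{\mathfrak m}(M)$: the inclusion $\supseteq$ comes from the factorizations $M\xrightarrow{\,a_i\,}JM\hookrightarrow M$, and the inclusion $\subseteq$ from the surjection $H^d_{\mathfrak m}(M)^{\oplus s}\to H^d_{\mathfrak m}(JM)$ induced by $M^{\oplus s}\twoheadrightarrow JM$, using that $H^d_{\mathfrak m}(-)$ is right exact on modules of dimension at most $d$.

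The step I expect to be the crux is the construction and control of $J$: the submodule $N$ must be chosen so that $J=(N:_RM)$ is well defined independently of the primary decomposition and so that, simultaneously, $N$ obeys the Hartshorne--Lichtenbaum hypothesis while the auxiliary module $N/JM$ has dimension strictly less than $d$. Once this is arranged, the remaining ingredients --- invariance of local cohomology under the radical, vanishing above the dimension, and right exactness of the top local cohomology functor --- are routine.
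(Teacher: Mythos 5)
Your proof is correct. The first half --- passing from $H^d_I(M)$ to $H^d_{\mathfrak m}(M/N)$ via the submodule $N$ cut out by the primary components of $0\subset M$ whose primes $\mathfrak p$ satisfy $\dim R/\mathfrak p=d$ and $\dim R/(I+\mathfrak p)=0$ --- is exactly the paper's Theorem \ref{3.3}: your $N$ is the module $Q_I(M)$ of Definition \ref{3.1}, and the two ingredients (Hartshorne--Lichtenbaum applied to $N$, and $\Rad(I+\Ann_R(M/N))=\mathfrak m$) coincide with the paper's. Where you genuinely diverge is in converting the submodule $N$ into an ideal. The paper (Corollary \ref{3.5}) takes $J=P_I(M)$, the intersection of the corresponding primary components of $\Ann_RM$, and reaches $H^d_{\mathfrak m}(M/JM)$ by base change to $\overline R=R/\Ann_RM$ together with right exactness of the top local cohomology functor, via $H^d_{I\overline R}(M)\simeq H^d_{I\overline R}(\overline R)\otimes_RM$. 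You instead take $J=(N:_RM)$ and compare $M/JM$ with $M/N$ directly, showing $\dim(N/JM)<d$ by localizing the primary decomposition at the relevant primes (which are minimal since they have maximal dimension); this is a clean, more elementary alternative, and your hands-on identification of $\im\bigl(H^d_{\mathfrak m}(JM)\to H^d_{\mathfrak m}(M)\bigr)$ with $JH^d_{\mathfrak m}(M)$ replaces the paper's appeal in Remark \ref{3.6} to $H^d_{\mathfrak m}(M/JM)\simeq H^d_{\mathfrak m}(M)\otimes_R R/J$. The two routes produce different ideals in general (yours is $\Ann_R(M/Q_I(M))$, the paper's is assembled from a primary decomposition of $\Ann_RM$), but they have the same radical and each is explicit, which is all the theorem demands; the paper's tensor formulation has the mild advantage of delivering the third isomorphism with no extra work, while yours avoids invoking right exactness as a black box.
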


\section{Auxiliary results}
Let $(R,\mathfrak{m})$ denote a local ring. Let  $E_R(R/\mathfrak{m})$ denote  the injective hull of the 
residue field $R/\mathfrak{m} = k.$ The Matlis duality functor $\Hom_R(\cdot, E_R(R/\mathfrak{m}))$ 
is denoted by $D(\cdot).$ 
 We need a Lemma concerning $R$-modules such that $\Supp_R M \subseteq \{\mathfrak{m}\},$ see e.g. 
 \cite[Lemma 2.1]{Sch6}.

\begin{lemma} \label{2.1} Let $M$ be an  $R$-module and $N$ an $\hat R$-module.
\begin{itemize}
\item[(a)] Suppose $\Supp_R M \subseteq V(\mathfrak m).$ Then $M$ admits a unique
$\hat R$-module structure compatible with its $R$-module structure such that the natural
map $M \otimes_R \hat R \to M$ is an isomorphism.
\item[(b)] The module $\Ext_R^i(M,N), i \in \mathbb Z,$ might be considered as an $\hat R$-module
such that there is a natural isomorphism $\Ext^i_R(M,N) \simeq \Ext^i_{\hat R}(M \otimes_R  \hat R, N).$
\item[(c)] The Matlis dual $D(M)$ admits a natural $\hat R$-module structure.
\end{itemize}
\end{lemma}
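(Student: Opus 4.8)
The plan is to establish (a) first and then obtain (b) and (c) as formal consequences. For (a), the first step is to observe that the hypothesis $\Supp_R M \subseteq V(\mathfrak m)$ forces $M$ to be $\mathfrak m$-power torsion: for $x \in M$ the cyclic module $Rx \simeq R/\Ann_R x$ has support contained in $\{\mathfrak m\}$, hence finite length, so $\mathfrak m^{n}x = 0$ for some $n = n(x)$; therefore $M = \bigcup_{n \geq 1}(0 :_M \mathfrak m^{n})$. Next I would define the $\hat R$-action: for $\hat r \in \hat R$ and $x \in (0 :_M \mathfrak m^{n})$, put $\hat r \cdot x := r x$ for any $r \in R$ with $r \equiv \hat r$ modulo $\mathfrak m^{n}\hat R$. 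The point to check --- the one genuinely delicate bookkeeping step --- is that this is independent of the choice of $r$, which amounts to the identity $\mathfrak m^{n}\hat R \cap R = \mathfrak m^{n}$ coming from faithful flatness of $\hat R$ over $R$; granting this, one verifies routinely that this defines an $\hat R$-module structure restricting to the given $R$-structure. Equivalently and more structurally, each $(0 :_M \mathfrak m^{n})$ is an $R/\mathfrak m^{n} = \hat R/\mathfrak m^{n}\hat R$-module, the structures are compatible along the inclusions, and one takes the union.

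For uniqueness I would note that any compatible $\hat R$-structure on $M$ must act on an element killed by $\mathfrak m^{n}$ through $\hat R/\mathfrak m^{n}\hat R = R/\mathfrak m^{n}$, hence coincides with the one constructed above. To see that $M \otimes_R \hat R \to M$ is an isomorphism I would use that $- \otimes_R \hat R$ commutes with the filtered colimit $M = \bigcup_n (0 :_M \mathfrak m^{n})$ and that for each $n$ one has $(0 :_M \mathfrak m^{n}) \otimes_R \hat R \simeq (0 :_M \mathfrak m^{n}) \otimes_{R/\mathfrak m^{n}} (\hat R/\mathfrak m^{n}\hat R) \simeq (0 :_M \mathfrak m^{n})$ compatibly, so the natural map is an isomorphism in the colimit.

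For (b), I would take a free resolution $F_\bullet \to M$ over $R$; since $\hat R$ is flat over $R$, $F_\bullet \otimes_R \hat R$ is a free $\hat R$-resolution of $M \otimes_R \hat R$. Hom-tensor adjunction gives a natural isomorphism of complexes $\Hom_R(F_\bullet, N) \simeq \Hom_{\hat R}(F_\bullet \otimes_R \hat R, N)$, an isomorphism of complexes of $\hat R$-modules, where the $\hat R$-structure on the left-hand side is the one induced from that of $N$. Passing to cohomology yields the asserted natural isomorphism $\Ext^i_R(M,N) \simeq \Ext^i_{\hat R}(M \otimes_R \hat R, N)$ of $\hat R$-modules. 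Finally (c) is the special case $i = 0$, $N = E_R(R/\mathfrak m)$: by (a) (or directly, since $E_R(R/\mathfrak m) \simeq E_{\hat R}(\hat R/\mathfrak m\hat R)$) the module $E_R(R/\mathfrak m)$ is an $\hat R$-module, so $D(M) = \Hom_R(M, E_R(R/\mathfrak m))$ inherits an $\hat R$-structure, and moreover $D(M) \simeq \Hom_{\hat R}(M \otimes_R \hat R, E_R(R/\mathfrak m))$.

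I expect no real obstacle here; the statement is essentially folklore (cf. \cite[Lemma 2.1]{Sch6}), and the only care needed is in the well-definedness of the approximation action in (a) and in checking that the adjunction isomorphisms in (b) are $\hat R$-linear and natural rather than merely $R$-linear.
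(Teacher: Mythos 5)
Your proof is correct; the paper itself gives no argument for this lemma but simply cites \cite[Lemma 2.1]{Sch6}, and what you have written is precisely the standard argument behind that reference: $\mathfrak m$-power torsionness from the support condition, the action through $\hat R/\mathfrak m^n\hat R \simeq R/\mathfrak m^n$ on each $(0:_M \mathfrak m^n)$, and flat base change plus adjunction for the $\Ext$ statement. The only point worth flagging is cosmetic: your proof of (b) never uses the support hypothesis on $M$ (correctly so, since only $N$ being an $\hat R$-module matters there), which matches the intended generality of the cited lemma.
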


We need the definition of the canonical module of a finitely generated $R$-module $M.$ To this end 
let $(R,\mathfrak m)$ be the epimorphic image of a local Gorenstein ring $(S,\mathfrak{n})$ and 
$n = \dim  S.$ 

\begin{definition} \label{2.2}  (A)
For a finitely generated $R$-module $M$ we consider 
\[
K(M) = \Hom_R(H^d_{\mathfrak{m}}(M),E_R(R/\mathfrak m)),  \; d = \dim M.
\] 
Because $H^d_{\mathfrak{m}}(M)$ is an Artinian $R$-module it turns out that $K(M)$ is a 
 finitely generated $\hat R$-module. \\
 (B)
This is closely related to the notion of the canonical module $K_M$ of a finitely generated $R$-module 
$M.$ To this end we have to assume that $(R,\mathfrak m)$ is the epimorphic image of a local 
Gorenstein ring $(S,\mathfrak{n})$ with $n = \dim  S.$   
Then $K_M = \Ext_S^{n-d}(M,S)$ is called the canonical module of $M.$ By virtue of the Cohen Structure 
Theorem and  (A)  there is an isomorphism $K(M) \simeq K_{\hat{M}}.$ By 
the Local Duality Theorem (see \cite{G}) there are the following isomorphisms 
\[ 
\Hom_R(H^d_{\mathfrak{m}}(M), E_R(R/\mathfrak{m})) \simeq K(M) \simeq K_{\hat{M}}. 
\]
\end{definition} 

Moreover, in order to describe certain results on the canonical module we need a 
further technical definition. 

\begin{definition} \label{2.3} For an ideal $I \subset R$ with $\dim R/I = d$ we will denote 
by $I_d$ the intersection of those primary components in a minimal reduced primary 
decomposition of $I$ which are of dimension $d.$ If $Z \subset \Spec R$ and $d \in \mathbb{N},$ 
then we put $Z_d = \{\mathfrak{p} \in Z | \dim R/\mathfrak{p}= d\}.$ 
\end{definition} 

In the following we summarize a few results about the canonical module $K_M$ as well as 
of $K(M).$ Most of the statements are known in the literature. When we write $K_M$ we always 
assume that $R$ is a factor ring of a Gorenstein ring $S$ as above.  

\begin{proposition} \label{2.4} 
Let $M$ denote a finitely generated $R$-module. With the previous conventions and notation 
the following statements  hold: 
\begin{itemize} 
\item[(a)] $\Ass_{\hat{R}} K(M) = (\Ass_{\hat{R}} \hat{M})_d$ and $\Ass_R K_M = (\Ass_R M)_d.$ 
\item[(b)] $\Ann_{\hat{R}} K(M) = (\Ann_{\hat{R}} \hat{M})_d$ and $\Ann_R K_M = (\Ann_R M)_d.$ 
\item[(c)]  The module $K_M$ satisfies Serre's condition $S_2,$ that is 
\[ 
\depth (K_M)_{\mathfrak{p} }\geq \min \{2, \dim (K_M)_{\mathfrak{p}}\}  \text{ for all } \mathfrak{p} \in   
\Supp_R K_M.
\]
\item[(d)] The kernel of the natural map $R \to \Hom_R(K_R, K_R)$ is $0_d.$ It is an isomorphism 
if and only if  $R$ satisfies $S_2.$ 
\end{itemize}
\end{proposition}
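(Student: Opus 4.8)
The plan is to work throughout with $R=S/\mathfrak a$, where $(S,\mathfrak n)$ is Gorenstein local of dimension $n$, so that $K_M=\Ext_S^{\,n-d}(M,S)\simeq D(H^d_{\mathfrak m}(M))$ and $K_M\simeq K(M)$; the general assertions about $K(M)$ then follow by applying the outcome over $\hat R$ and recalling $K(M)\simeq K_{\hat M}$. The two tools are Local Duality and Matlis duality (which interchanges $\Ass$ and $\Att$), organised around the following localization formula: for $\mathfrak p\in\Spec R$ with preimage $\mathfrak q\subset S$, localizing $\Ext_S^{\,n-d}(M,S)$ and applying Local Duality over the Gorenstein ring $S_{\mathfrak q}$ — which has dimension $n-\dim R/\mathfrak p$ since $S$ is catenary and equidimensional — gives
\[
(K_M)_{\mathfrak p}\ \simeq\ \Hom_{R_{\mathfrak p}}\!\bigl(H^{\,d-\dim R/\mathfrak p}_{\mathfrak pR_{\mathfrak p}}(M_{\mathfrak p}),\,E_{R_{\mathfrak p}}(R_{\mathfrak p}/\mathfrak pR_{\mathfrak p})\bigr).
\]
Since $R$ is catenary, $\dim M_{\mathfrak p}+\dim R/\mathfrak p\le d$, so this module vanishes unless $\dim M_{\mathfrak p}+\dim R/\mathfrak p=d$, and in that case the cohomology in question is the top one, whence $(K_M)_{\mathfrak p}\simeq K(M_{\mathfrak p})\simeq K_{\widehat{M_{\mathfrak p}}}$. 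In particular $\Supp_R K_M=\{\mathfrak p:\dim M_{\mathfrak p}+\dim R/\mathfrak p=d\}$.

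For (a): as $K_M\simeq D(H^d_{\mathfrak m}(M))$ with $H^d_{\mathfrak m}(M)$ Artinian, Matlis duality gives $\Ass_R K_M=\Att_R H^d_{\mathfrak m}(M)$, and the attached primes of the top local cohomology module of $M$ are precisely $(\Ass_R M)_d$ by a standard result. For (b): let $H\subseteq M$ be the largest submodule of dimension $<d$, i.e.\ the intersection of the $d$-dimensional primary components of the zero submodule; then $M/H$ is unmixed of dimension $d$ with $\Ann_R(M/H)=(\Ann_R M)_d$, and since $\Ext_S^{\,j}(H,S)=0$ for $j\le n-d$ (as $\dim H<d$ and $S$ is Cohen--Macaulay), the sequence $0\to H\to M\to M/H\to0$ yields $K_M\simeq K_{M/H}$. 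Thus we may assume $M$ unmixed of dimension $d$; then for each $\mathfrak p\in\Assh_R M$ the localization formula gives $(K_M)_{\mathfrak p}\simeq D(M_{\mathfrak p})$, so, using $\Ann D(X)=\Ann X$ for any module $X$, $\Ann_R K_M\subseteq(0:_R M_{\mathfrak p})$, the $\mathfrak p$-primary component of $\Ann_R M$. Intersecting over $\mathfrak p\in\Assh_R M=\Ass_R M$ gives $\Ann_R K_M\subseteq\Ann_R M$; the reverse inclusion is clear.

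For (c): by the localization formula $(K_M)_{\mathfrak p}\simeq K_{\widehat{M_{\mathfrak p}}}$ on $\Supp_R K_M$, with $\dim(K_M)_{\mathfrak p}=\dim M_{\mathfrak p}$, so Serre's condition $S_2$ for $K_M$ is equivalent to the single statement: $\depth K_N\ge\min\{2,\dim N\}$ for every finitely generated module $N$ over a complete local ring. When $\dim N\le1$ this follows from part (a), since $\mathfrak m\notin\Ass K_N$ as soon as $\dim N\ge1$. When $\dim N=t\ge2$ it remains to prove $H^1_{\mathfrak m}(K_N)=0$, equivalently $\Ext_S^{\,n-1}(K_N,S)=0$; this I would read off the biduality spectral sequence $E_2^{p,q}=\Ext_S^{\,p}(\Ext_S^{\,-q}(N,S),S)\Rightarrow H^{p+q}(N)$: the term $E_2^{\,n-1,\,-(n-t)}=\Ext_S^{\,n-1}(K_N,S)$ lies in total degree $t-1\ge1$, and every differential into or out of it vanishes (the sources are $\Ext_S^{\,<\,n-t}(N,S)=0$, the targets $\Ext_S^{\,>n}(-,S)=0$), so it equals $E_\infty^{\,n-1,\,-(n-t)}$, a subquotient of $H^{t-1}(N)=0$.

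For (d): the natural map $R\to\Hom_R(K_R,K_R)$ is multiplication, so its kernel is $\Ann_R K_R=(\Ann_R R)_d=0_d$ by (b); hence it factors as $R\twoheadrightarrow R/0_d\hookrightarrow\Hom_R(K_R,K_R)$ and is an isomorphism exactly when $0_d=0$ and the second arrow is onto. Now $K_R=K_{R/0_d}$ and $R/0_d$ is unmixed of dimension $d$, and under the natural identification $\Hom_R(K_R,K_R)\simeq K_{K_R}$ that second arrow becomes the canonical $S_2$-ification map of $R/0_d$, which is an isomorphism iff $R/0_d$ satisfies $S_2$; finally, if $R$ satisfies $S_2$ then $R$ has no embedded primes and, being a quotient of a Gorenstein ring, is equidimensional, so $0_d=0$ and $R=R/0_d$ is $S_2$, while conversely an isomorphism forces $0_d=0$, whence $R=R/0_d$ is unmixed and $S_2$. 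I expect the real work, and the main obstacle, to lie in the two ingredients I have only quoted: the identification $\Hom_R(K_R,K_R)\simeq K_{K_R}$ together with the fact that the ensuing $S_2$-ification map detects $S_2$ precisely (used in (d)), and the vanishing $H^1_{\mathfrak m}(K_N)=0$ (used in (c)). Both rest on a careful analysis of the biduality spectral sequence over the Gorenstein ring $S$; everything else is formal manipulation of the two duality functors and of primary decomposition, organised around the localization formula established at the outset.
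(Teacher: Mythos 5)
The paper itself offers no proof of Proposition \ref{2.4}: the given ``proof'' simply points to Grothendieck for (a) and (b), to Hochster--Huneke for (c), and to Aoyama and Aoyama--G\^oto for (d). So your write-up is a genuine reconstruction rather than a reproduction, and in outline it is the correct one --- local duality over $S_{\mathfrak q}$, the Matlis-duality interchange $\Ass\leftrightarrow\Att$, the biduality spectral sequence over $S$, and the $S_2$-ification identification --- which is indeed how those references argue. Two points deserve flagging. First, your localization formula is stated slightly too strongly: local duality over $S_{\mathfrak q}$ identifies $\Hom_{R_{\mathfrak p}}(H^{d-\dim R/\mathfrak p}_{\mathfrak p R_{\mathfrak p}}(M_{\mathfrak p}),E_{R_{\mathfrak p}}(k(\mathfrak p)))$ with the $\mathfrak p R_{\mathfrak p}$-adic \emph{completion} of $(K_M)_{\mathfrak p}$, not with $(K_M)_{\mathfrak p}$ itself, and likewise $K(M_{\mathfrak p})\simeq K_{\widehat{M_{\mathfrak p}}}$ is an $\widehat{R_{\mathfrak p}}$-module. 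The version you actually need and should state is the uncompleted $(K_M)_{\mathfrak p}\simeq K_{M_{\mathfrak p}}=\Ext^{\,n'-d'}_{S_{\mathfrak q}}(M_{\mathfrak q},S_{\mathfrak q})$, obtained directly by localizing the $\Ext$ and using $n-d=\dim S_{\mathfrak q}-\dim M_{\mathfrak p}$ on $\Supp K_M$; this is enough since all your downstream uses (Ass, Ann, depth, dim) are insensitive to faithfully flat base change, and in the one place you use it on the nose (part (b), where $M_{\mathfrak p}$ has finite length) the completion is a no-op anyway. Second, as you yourself say, the $\Hom_R(K_R,K_R)\simeq K_{K_R}$ identification together with ``$R/0_d\to K_{K_R}$ is an isomorphism iff $R/0_d$ is $S_2$'' are quoted, not proved; these are precisely the Aoyama and Aoyama--G\^oto theorems that the paper cites for (d), so you have correctly located where the non-formal content sits. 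With the localization formula corrected as above, parts (a), (b), and (c) are complete and correct; part (d) is correct modulo the two cited facts, exactly as in the paper.
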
 

\begin{proof} For the proof of (a) and (b) we refer to \cite[Proposition 6.6]{G}. For the proof of 
(c) see \cite[Lemma 1.9]{HH} and for (d)  see \cite{A} and \cite[Proposition 1.2]{A-G},. For some additional information we refer also to \cite[Lemma 1.9]{Sch}.
\end{proof}

\section{Remarks to the Hartshorne-Lichtenbaum Vanishing Theorem}

 In this section let $M$ denote a $d$-dimensional, finitely generated $R$-module. Here 
 $(R,\mathfrak{m})$ is a local ring. The functor $\hat{\cdot}$ denotes the completion 
 functor. 
 
 For an $R$-module $M$ let $0 = \cap_{i = 1}^{n} Q_i(M)$ denote a reduced minimal primary 
 decomposition of the zero submodule of $M.$ That is $M/Q_i(M), i = 1,\ldots,n,$ is a $\mathfrak{p}_i$-primary 
 $R$-module.  Clearly $\Ass_R M = \{\mathfrak{p}_1,\ldots,\mathfrak{p}_n\}.$ 
 
 \begin{definition} \label{3.1}  Let $I \subset R$ denote an ideal of $R.$ We define two 
 disjoint subsets $U, V$ of $\Ass_R M$ related to $I.$
 \begin{itemize}
  \item[(a)] $U = \{\mathfrak{p} \in \Ass_R M | \dim R/\mathfrak{p} = d \text{ and } 
 \dim R/I +\mathfrak{p} = 0 \}.$ 
 \item[(b)] $V = \{\mathfrak{p} \in \Ass_R M | \dim R/\mathfrak{p} < d \text{ or } 
 \dim R/\mathfrak{p} = d \text{  and } \dim R/I + \mathfrak{p} > 0 \}.$ 
 \end{itemize}
 Finally we define $Q_I(M) = \cap_{\mathfrak{p}_i \in U} Q_i(M).$ In the case $U = \emptyset$ put 
 $Q_I(M) = M.$
 \end{definition}
 
 The following Lemma gives a better understanding of the previous definitions.
 
 \begin{lemma} \label{3.2} With the previous notation we have that 
 \[
 \Ass_R Q_I(M) = V, \; \Ass_R M/Q_I(M) = U\; \text{ and } \;U \cup V = \Ass_R M.
 \]
 \end{lemma}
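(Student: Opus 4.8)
The statement has three parts, of which $U \cup V = \Ass_R M$ is immediate: by Definition \ref{3.1} every $\mathfrak p \in \Ass_R M$ falls into exactly one of the two cases (either $\dim R/\mathfrak p = d$ and $\dim R/I+\mathfrak p = 0$, which is $U$; or the negation, which is precisely $V$), and $U \cap V = \emptyset$ by construction. So the real content is identifying the associated primes of the submodule $Q_I(M) = \bigcap_{\mathfrak p_i \in U} Q_i(M)$ and of the quotient $M/Q_I(M)$.

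The plan is to exploit the general behaviour of associated primes under a reduced primary decomposition. Write $0 = \bigcap_{i=1}^n Q_i(M)$ with $M/Q_i(M)$ being $\mathfrak p_i$-primary. First I would handle $M/Q_I(M)$: since $Q_I(M) = \bigcap_{\mathfrak p_i \in U} Q_i(M)$ is itself an intersection of primary submodules indexed by $U$, the quotient $M/Q_I(M)$ embeds into $\bigoplus_{\mathfrak p_i \in U} M/Q_i(M)$, so $\Ass_R M/Q_I(M) \subseteq U$; conversely each $\mathfrak p_i \in U$ appears because the decomposition is reduced (dropping any one $Q_i(M)$ with $\mathfrak p_i\in U$ enlarges the intersection, forcing $\mathfrak p_i$ to be associated to the quotient). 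Hence $\Ass_R M/Q_I(M) = U$. For $Q_I(M)$ itself, note that $0 = Q_I(M) \cap \big(\bigcap_{\mathfrak p_i \in V} Q_i(M)\big)$, and the second factor, call it $Q'$, satisfies $\Ass_R M/Q' = V$ by the same reasoning. Then the standard fact that for $0 = N_1 \cap N_2$ inside $M$ one has $Q_I(M) \hookrightarrow M/Q'$ (the composite $Q_I(M) \to M \to M/Q'$ is injective because $Q_I(M)\cap Q' = 0$) gives $\Ass_R Q_I(M) \subseteq \Ass_R M/Q' = V$. The reverse inclusion $V \subseteq \Ass_R Q_I(M)$ again uses minimality/reducedness of the original decomposition together with $\Ass_R Q_I(M) \subseteq \Ass_R M = U \cup V$ and $\Ass_R Q_I(M) \cap U = \emptyset$ (any $\mathfrak p \in U$ associated to $Q_I(M)$ would also be associated to $M$ via the submodule, but then $Q_I(M)$ would contain a copy of $R/\mathfrak p$ meeting $Q'$ nontrivially only if..., which one rules out by the disjointness of the index sets).

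The step I expect to be the main obstacle is pinning down the reverse inclusions cleanly, i.e. that $U \subseteq \Ass_R M/Q_I(M)$ and $V \subseteq \Ass_R Q_I(M)$, rather than just the easy containments $\subseteq$. The clean way is to invoke that in a reduced primary decomposition the associated primes of a partial intersection $\bigcap_{i \in T} Q_i(M)$ are exactly $\{\mathfrak p_i : i \notin T\}$ — this is a standard module-theoretic fact (it follows from, e.g., \cite[]{} on primary decomposition, or one proves it by the localization argument: localize at $\mathfrak p_j$ for $j \notin T$ to see the intersection is not contained in $Q_j(M)$, while localizing at any prime not among the $\mathfrak p_i$ kills everything). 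Applying this with $T = \{i : \mathfrak p_i \in U\}$ gives $\Ass_R Q_I(M) = \{\mathfrak p_i : \mathfrak p_i \notin U\} = V$ and with $T$ a singleton-complement logic, or directly via the embedding $M/Q_I(M) \hookrightarrow \bigoplus_{i\in T} M/Q_i(M)$ together with each $\mathfrak p_i$ ($i \in T$) actually occurring, gives $\Ass_R M/Q_I(M) = U$. Once this standard lemma is in hand the rest is bookkeeping; the disjointness $U \sqcup V = \Ass_R M$ then closes the argument.
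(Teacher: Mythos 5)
Your proposal is correct and follows the same route the paper takes: the claim reduces to the standard fact about partial intersections in a reduced primary decomposition, namely $\Ass_R \bigl(\bigcap_{i\in T} Q_i(M)\bigr)=\{\mathfrak p_i : i\notin T\}$ and $\Ass_R \bigl(M/\bigcap_{i\in T} Q_i(M)\bigr)=\{\mathfrak p_i : i\in T\}$, applied with $T$ indexing $U$; the paper simply cites this (via \cite[Lemma 2.7]{Sch2}). One small caveat: your parenthetical localization sketch for the hard inclusion $\{\mathfrak p_j : j\notin T\}\subseteq\Ass_R\bigcap_{i\in T}Q_i(M)$ does not quite close the argument (knowing that $N=\bigcap_{i\in T}Q_i(M)$ is not contained in $Q_j(M)$ does not by itself force $\mathfrak p_j\in\Ass_R N$); the clean route is the exact sequence $0\to N\to M\to M/N\to 0$ together with $\Ass_R M\subseteq\Ass_R N\cup\Ass_R M/N$ and the already-established $\Ass_R M/N=\{\mathfrak p_i : i\in T\}$.
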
 
 
 \begin{proof} The proof is an easy consequence of the primary decomposition of $M, M/Q_I(M)$ 
 and $Q_I(M)$ (see \cite[Lemma 2.7]{Sch2}). 
 \end{proof}
 
 Now we are prepared in order to establish the first main result of this section. It explains 
 in more detail the structure of $H^d_I(M), d = \dim M.$ 
 
 \begin{theorem} \label{3.3} Let $I$ denote an ideal of a local ring $(R,\mathfrak{m}).$ Let $M$ be a finitely 
 generated $R$-module and $d = \dim M.$ Then there is a natural isomorphism 
 \[
 H^d_I(M) \simeq H^d_{\mathfrak{m}\hat{R}}(\hat{M}/Q_{I\hat{R}}(\hat{M})),
 \]
 where $\hat{\cdot}$ denotes the $\mathfrak{m}$-adic completion. 
 \end{theorem}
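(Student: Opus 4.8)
The plan is to reduce everything to the complete case and then isolate exactly the part of $\hat M$ that contributes in top degree. First I would note that $H^d_I(M) \simeq H^d_{I\hat R}(\hat M)$, since completion is flat and local cohomology commutes with it (and $\dim \hat M = d$). So we may assume $R$ is complete and drop the hats, proving $H^d_I(M) \simeq H^d_{\mathfrak m}(M/Q_I(M))$. Next I would invoke the Hartshorne--Lichtenbaum Vanishing Theorem (stated in the introduction) together with the secondary/attached-prime description of the Artinian module $H^d_I(M)$ from \cite[Section 3]{D-Sch}: the associated primes of $M$ that actually ``survive'' into $H^d_I(M)$ are precisely those $\mathfrak p \in \Ass_R M$ with $\dim R/\mathfrak p = d$ and $\dim R/(I+\mathfrak p) = 0$, i.e. the set $U$ of Definition \ref{3.1}. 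The complementary primes, forming $V$, either have dimension $< d$ (so contribute nothing in degree $d$ to any top local cohomology) or satisfy $\dim R/(I+\mathfrak p) > 0$ (so contribute nothing to $H^d_I$ by Hartshorne--Lichtenbaum).

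The technical heart is the following claim: the short exact sequence
\[
0 \to Q_I(M) \to M \to M/Q_I(M) \to 0
\]
induces an isomorphism $H^d_I(M) \simeq H^d_I(M/Q_I(M))$, and moreover $H^d_I(M/Q_I(M)) \simeq H^d_{\mathfrak m}(M/Q_I(M))$. For the first part I would use the long exact sequence in $H^\bullet_I(-)$: it suffices to show $H^d_I(Q_I(M)) = 0$. By Lemma \ref{3.2}, $\Ass_R Q_I(M) = V$; for $\mathfrak p \in V$ with $\dim R/\mathfrak p = d$ we have $\dim R/(I+\mathfrak p) > 0$, so by Hartshorne--Lichtenbaum $H^d_I(Q_I(M)) = 0$ (the primes of smaller dimension cannot obstruct vanishing in degree $\dim Q_I(M) = d$ either, since $d > \dim R/\mathfrak p$ forces the relevant condition). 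For the second isomorphism, again by Lemma \ref{3.2} we have $\Ass_R (M/Q_I(M)) = U$, so for every $\mathfrak p \in \Ass_R(M/Q_I(M))$ with $\dim R/\mathfrak p = d$ one has $\dim R/(I+\mathfrak p) = 0$, i.e. $\Rad(I + \mathfrak p) = \mathfrak m$. This means $I$ and $\mathfrak m$ have the same radical ``on each top-dimensional component'' of $M/Q_I(M)$; the standard comparison argument (e.g. via the independence of local cohomology on the ideal up to radical, applied componentwise, or directly: $H^d_{\mathfrak m}(M/Q_I(M))$ and $H^d_I(M/Q_I(M))$ agree because $M/Q_I(M)$ has no associated primes outside $V(I)_{\mathrm{top}} = V(\mathfrak m)_{\mathrm{top}}$ in dimension $d$) gives the desired identification.

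The main obstacle I anticipate is the last comparison step: carefully arguing that $\dim R/(I+\mathfrak p) = 0$ for all top-dimensional $\mathfrak p \in \Ass_R(M/Q_I(M))$ is enough to force $H^d_I(M/Q_I(M)) \simeq H^d_{\mathfrak m}(M/Q_I(M))$, rather than merely that both are nonzero. The clean way is to filter $M/Q_I(M)$ by primary submodules corresponding to the $\mathfrak p_i \in U$ and use the long exact sequence together with the fact that for a $\mathfrak p$-primary module $N$ with $\dim N = d$ and $\Rad(I+\mathfrak p) = \mathfrak m$ one has $\Rad(I + \Ann N) = \mathfrak m$, hence $H^d_I(N) = H^d_{\mathfrak m}(N)$ by the independence of local cohomology on the ideal up to radical; the pieces of dimension $< d$ contribute zero in degree $d$ on both sides. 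Assembling these via the long exact sequences completes the proof; everything else is routine bookkeeping with primary decomposition and Lemma \ref{3.2}.
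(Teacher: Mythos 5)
Your proposal is correct and follows the paper's strategy almost exactly: reduce to the complete case via the Artinian (hence $\hat R$-linear) structure of $H^d_I(M)$, kill $H^d_I(Q_I(M))$ by Hartshorne--Lichtenbaum using $\Ass_R Q_I(M)=V$, and then identify $H^d_I(M/Q_I(M))$ with $H^d_{\mathfrak m}(M/Q_I(M))$. The only divergence is in the last step, where you anticipate an ``obstacle'' and propose a d\'evissage of $M/Q_I(M)$ into coprimary pieces followed by an assembly via long exact sequences. This is unnecessary: the annihilator trick you state for a single $\mathfrak p$-coprimary module applies verbatim to $N=M/Q_I(M)$ itself, because \emph{all} minimal primes of $\Supp_R N$ lie in $U$ (they all have dimension $d$), so
\[
V\bigl(I+\Ann_R N\bigr)=V(I)\cap \Supp_R N=\bigcup_{\mathfrak p\in U}V(I+\mathfrak p)=\{\mathfrak m\},
\]
i.e. $\Rad(I+\Ann_R N)=\mathfrak m$, and base change gives $H^d_I(N)\simeq H^d_{I+\Ann_R N}(N)=H^d_{\mathfrak m}(N)$ in one line; this is exactly what the paper does. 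If you do insist on the filtration route, note that the naive five-lemma comparison in the top degree alone is delicate (injectivity of the middle map needs care), so you would want to observe that on each coprimary subquotient the natural transformation $H^i_{\mathfrak m}\to H^i_I$ is an isomorphism in \emph{all} degrees $i$, not just $i=d$ --- another reason to prefer the direct computation with $\Ann_R(M/Q_I(M))$.
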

 
 \begin{proof} First note that $H^d_I(M)$ is an Artinian $R$-module. So it admits a unique  
 $\hat{R}$-module structure compatible with its $R$-module structure such that the natural 
 homomorphism 
 \[
 H^d_{I\hat{R}}(\hat{M}) \simeq H^d_I(M) \otimes_R \hat{R} \to H^d_I(M) 
 \] 
 is an isomorphism. That is, without loss of generality we may assume that $R$ is 
 complete. 
 
 Now apply the local cohomology functor to the short exact sequence
 \[
 0 \to Q_I(M) \to M \to M/Q_I(M) \to 0
 \] 
 it implies a natural isomorphism $H^d_I(M) \simeq H^d_I(M/Q_I(M)).$ To this end recall 
 that $H^i_I(Q_I(M)) = 0$ for all $i \geq d.$ The vanishing for $i = d$ follows by the 
 Hartshorne-Lichtenbaum Vanishing Theorem because of $\Ass_R Q = V,$ where $Q = Q_I(M).$ By 
 the base change of local cohomology there is the isomorphism 
 \[
 H^d_I(M/Q_I(M)) \simeq H^d_{I + \Ann_R M/Q}(M/Q).
 \]
 In order to complete the proof it is enough to show that $\mathfrak{m} = \Rad (I + \Ann_R M/Q).$ 
 To this end consider 
 \[
 V(I + \Ann_R M/Q) = V(I) \cup \Supp_R M/Q = \cup_{\mathfrak{p} \in U} V(I + \mathfrak{p}) = \{\mathfrak m\},
 \]
 as required.
 \end{proof}

 In the case of $M = R$ in Theorem \ref{3.3} it follow that 
 $H^d_I(R) = H^d_{\mathfrak{m} \hat{R}}(\hat{R}/Q_{I\hat{R}}(\hat{R}) ).$ 
 By the definition $Q_{I\hat{R}}(\hat{R}) $ is equal to the intersection of all the 
 $\mathfrak{p}$-primary component of a reduced minimal primary decomposition of the 
 zero ideal in $\hat{R}$ such that $\dim \hat{R}/\mathfrak{p} = \dim R$ and 
 $\dim \hat{R}/I \hat{R} + \mathfrak{p} = 0.$ 
 Next we want to extend this to the case of an $R$-module $M.$ 
 
 \begin{definition} \label{3.4} Let $M$ denote a finitely generated module over the  local 
 ring $(R, \mathfrak{m}).$   Let $I \subset R$ denote an ideal. Then define $P_I(M)$ as the 
 intersection of all the primary components of $\Ann_R M$ such that $\dim R/\mathfrak{p} 
 = \dim M$ and $\dim R/I + \mathfrak{p} = 0.$ Clearly $P_I(M)$ is the preimage of 
 $Q_{I R/\Ann_R M}(R/\Ann_R M)$ in $R.$ 
 \end{definition} 

 With these preparations we are able to prove the extension we have in mind. 
  
 \begin{corollary} \label{3.5} Let $M$ denote a finitely generated $R$-module and $d = \dim M.$ Let 
 $I \subset R$ be an ideal. 
 Then 
 \[
  H^d_I(M) \simeq H^d_{\mathfrak{m}\hat{R}}(\hat{M}/P_I(\hat{M})\hat{M}),
 \]
 where $P_I(\hat{M}) \subset \hat{R}$ is the ideal as defined in Definition \ref{3.4}.
 \end{corollary}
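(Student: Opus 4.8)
The plan is to deduce Corollary~\ref{3.5} from Theorem~\ref{3.3} by comparing the submodule $Q_{I\hat R}(\hat M)$ of $\hat M$ with the submodule $P_I(\hat M)\hat M$. By Theorem~\ref{3.3} we have $H^d_I(M) \simeq H^d_{\mathfrak m\hat R}(\hat M/Q_{I\hat R}(\hat M))$, so it is enough to prove, over the complete local ring $\hat R$, a natural isomorphism $H^d_{\mathfrak m}(\hat M/P_{I}(\hat M)\hat M) \simeq H^d_{\mathfrak m}(\hat M/Q_{I}(\hat M))$; writing $(R,M,I)$ for $(\hat R,\hat M,I\hat R)$ we may therefore assume $R$ complete and seek
\[
H^d_{\mathfrak m}\bigl(M/P_I(M)M\bigr) \simeq H^d_{\mathfrak m}\bigl(M/Q_I(M)\bigr).
\]
I would get this from the short exact sequence
\[
0 \to Q_I(M)/P_I(M)M \to M/P_I(M)M \to M/Q_I(M) \to 0
\]
together with the two facts $P_I(M)M \subseteq Q_I(M)$ and $\dim\bigl(Q_I(M)/P_I(M)M\bigr) < d$: the latter makes $H^d_{\mathfrak m}$ and $H^{d+1}_{\mathfrak m}$ of the left term vanish, so the long exact local cohomology sequence yields the desired isomorphism, and combining with Theorem~\ref{3.3} finishes the proof.

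To establish the two facts I would localize at the primes of $U$ from Definition~\ref{3.1}. Each $\mathfrak p \in U$ has $\dim R/\mathfrak p = d = \dim M$ and is thus a minimal prime of $\Ann_R M$; hence the $\mathfrak p$-primary component $Q_i(M)$ of the zero submodule of $M$ is the unique one and equals $\ker(M \to M_{\mathfrak p})$, so $Q_I(M) = \ker\bigl(M \to \bigoplus_{\mathfrak p \in U} M_{\mathfrak p}\bigr)$ and $(Q_I(M))_{\mathfrak p} = 0$ for all $\mathfrak p \in U$. On the other hand, localizing the intersection defining $P_I(M)$ at such a $\mathfrak p$ kills every term except the $\mathfrak p$-primary component of $\Ann_R M$, and the same happens to $\Ann_R M$ itself; by minimality of $\mathfrak p$ over $\Ann_R M$ this gives $P_I(M)_{\mathfrak p} = (\Ann_R M)_{\mathfrak p} = \Ann_{R_{\mathfrak p}}(M_{\mathfrak p})$, hence $(P_I(M)M)_{\mathfrak p} = 0$. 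Comparing kernels of the localization maps $M \to \bigoplus_{\mathfrak p \in U} M_{\mathfrak p}$ now gives $P_I(M)M \subseteq Q_I(M)$. Finally $\Supp_R\bigl(Q_I(M)/P_I(M)M\bigr) \subseteq V(P_I(M))$, whose $d$-dimensional primes are exactly the minimal primes of $P_I(M)$, i.e. the elements of $U$ (this follows from Definition~\ref{3.4} and the minimality of the primes in $U$ over $\Ann_R M$); since $Q_I(M)/P_I(M)M$ vanishes after localizing at each $\mathfrak p \in U$, none of them lies in its support, so $\dim\bigl(Q_I(M)/P_I(M)M\bigr) < d$.

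The step I expect to be the main obstacle is the identity $P_I(M)_{\mathfrak p} = \Ann_{R_{\mathfrak p}}(M_{\mathfrak p})$ for $\mathfrak p \in U$: this is precisely where one must use both that the primes in $U$ are minimal over $\Ann_R M$ — so that all but one primary component become the unit ideal after localization — and the exact recipe of Definition~\ref{3.4}, which keeps the $\mathfrak q$-primary components of $\Ann_R M$ for exactly those $\mathfrak q$ with $\dim R/\mathfrak q = d$ and $\dim R/(I+\mathfrak q) = 0$, i.e. for $\mathfrak q \in U$. I would also record the degenerate case $U = \emptyset$ separately: there $Q_I(M) = M$ by Definition~\ref{3.1} and $P_I(M) = R$ as an empty intersection, so both sides are $0$, in agreement with the Hartshorne-Lichtenbaum Vanishing Theorem.
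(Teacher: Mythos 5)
Your proof is correct, but it follows a genuinely different route from the paper. The paper deduces the corollary from Theorem~\ref{3.3} by passing to $\overline{R}=R/\Ann_R M$ and using that the top local cohomology functor is right exact and commutes with base change, so that $H^d_I(M)\simeq H^d_{I\overline{R}}(\overline{R})\otimes_R M$; Theorem~\ref{3.3} applied to the ring $\overline{R}$ identifies $H^d_{I\overline{R}}(\overline{R})$ with $H^d_{\mathfrak m}(R/P_I(M))$, and tensoring back with $M$ gives $H^d_{\mathfrak m}(M/P_I(M)M)$. You instead stay inside $M$ and compare the two submodules directly: since every $\mathfrak p\in U$ is minimal over $\Ann_R M$, localization at these primes gives $P_I(M)_{\mathfrak p}=\Ann_{R_{\mathfrak p}}(M_{\mathfrak p})$ and $Q_I(M)=\ker\bigl(M\to\bigoplus_{\mathfrak p\in U}M_{\mathfrak p}\bigr)$, whence $P_I(M)M\subseteq Q_I(M)$ with $\dim\bigl(Q_I(M)/P_I(M)M\bigr)<d$, and Grothendieck vanishing in the long exact sequence finishes the argument. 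Your identification of the primes occurring in Definition~\ref{3.4} with the set $U$ is legitimate (the $d$-dimensional associated primes of $M$ are exactly the $d$-dimensional minimal primes of $\Ann_R M$), and the degenerate case $U=\emptyset$ is handled correctly. What the paper's argument buys is brevity — everything is delegated to Theorem~\ref{3.3} and the functorial isomorphism $H^d_{I\overline R}(-)\simeq H^d_{I\overline R}(\overline R)\otimes_{\overline R}(-)$; what yours buys is that it avoids invoking that right-exactness/base-change machinery, replacing it with elementary primary-decomposition and localization arguments, at the cost of a somewhat longer verification.
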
 
 
 \begin{proof}  As in the beginning of proof  of Theorem \ref{3.3}  we may assume  that $R$ 
 is a complete local ring without loss of 
 generality. Let $\overline{R} = R/\Ann_R M.$ Then by base change 
 and the right exactness there are the isomorphisms 
 \[
 H^d_I(M) \simeq H^d_{I\overline{R}}(M) \simeq H^d_{I \overline{R}}(\overline{R}) \otimes_R M.
 \] 
 Now by virtue of Theorem \ref{3.3} there is the isomorphism $H^d_{I\overline{R}}(\overline{R}) \simeq 
 H^d_{\mathfrak{m}}(R/P_I(M)).$ Therefore it follows that 
 \[
 H^d_{I \overline{R}}(\overline{R}) \otimes_R M \simeq H^d_{\mathfrak{m}}(R/P_I(M)) \otimes_R M \simeq 
 H^d_{\mathfrak{m}}(M/P_I(M)M),
 \]
 which finishes the proof of the statement.
 \end{proof}

 Let $A$ denote an Artinian $R$-module. Then the decreasing sequence of submoduls 
 $\{\mathfrak{m}^nA\}_{n \in \mathbb{N}}$ becomes stable. Let $\langle \mathfrak{m} \rangle A$ 
 denote the ultimative stable value of this sequence of decreasing submodules. 
 
 \begin{remark} \label{3.6} Let $I \subset R$ denote an ideal. For a finitely generated 
 $R$-module $M$ there is a natural epimorphism 
 \[
 H^d_{\mathfrak{m}}(M) \to H^d_I(M) \to 0, \; d = \dim M,
 \]
 (see \cite{D-Sch}). In fact (see \cite[Theorem 1.1]{D-Sch}) it induces an isomorphism 
 \[
 H^d_I(M) \simeq H^d_{\mathfrak{m}}(M)/\sum_{n \in \mathbb{N}}\langle \mathfrak{m} 
 \rangle (0 :_{H^d_{\mathfrak{m}}(M)} I^n).
 \]
 Thus the kernel is described as 
 $\sum_{n \in \mathbb{N}} \langle \mathfrak{m} \rangle  (0 :_{H^d_{\mathfrak{m}}(M)} I^n).$ 
 
 Let us consider the previous epimorphism as an epimorphism of $\hat{R}$-modules. Then by 
 Corollary \ref{3.5} its kernel is equal to $P_I(\hat{M}) H^d_{\mathfrak{m}\hat{R}}(\hat{M}),$ 
 or in other words  
 \[
 H^d_I(M) \simeq H^d_{\mathfrak{m}\hat{R}}(\hat{M})/P_I(\hat{M}) H^d_{\mathfrak{m}\hat{R}}(\hat{M}).
 \] 
 This follows easily since $H^d_{\mathfrak{m}\hat{R}}(\hat{M}/P_I(\hat{M})\hat{M}) \simeq 
 H^d_{\mathfrak{m}\hat{R}}(\hat{M}) \otimes_{\hat{R}} \hat{R}/P_I(\hat{M}).$ 
 \end{remark} 
 
 By virtue of Corollary \ref{3.5} and Remark \ref{3.6} this proves Theorem \ref{1.1}.

 \section{On the endomorphism ring} 
 In this Section let $(R, \mathfrak{m})$ denote a $d$-dimensional local ring. For an ideal 
 $I \subset R$ we investigate the endomorphism ring of $H^d_I(R).$ It is non-zero if and only 
 if $H^d_I(R)$ fails the Hartshorne-Lichtenbaum Vanishing Theorem.  That is, we study the natural 
 homomorphism 
 \[
 R \to \Hom_R(H^d_I(R),H^d_I(R)), \quad r \mapsto m_r,
 \]
 where $m_r$ denotes the multiplication map by $r \in R.$ Since $H^d_I(R)$ admits 
 the structure of an $\hat{R}$-module (see \ref{2.1}) it follows that $\Hom_R(H^d_I(R),H^d_I(R))$ 
 has a unique natural $\hat{R}$-module such that the diagram 
 \[
 \begin{array}{ccc}
   R & \to & \Hom_R(H^d_I(R), H^d_I(R)) \\
  \downarrow &  & \Vert \\
   \hat R & \to & \Hom_{\hat R}(H^d_I(R), H^d_I(R)).
 \end{array}
 \] 
is commutative. 
 That is, the map $R \to \Hom_R(H^d_I(R),H^d_I(R))$ factors through $\hat{R}.$ Before we study 
 the endomorphism ring we need an auxiliary statement on the Matlis dual of $H^d_I(R).$ 
 
 \begin{lemma} \label{4.1} Let $I$ denote an ideal in a local ring $(R, \mathfrak{m})$ and 
 $d = \dim R.$ 
 \begin{itemize} 
 \item[(a)] 
 $T_I(R) = \Hom_R(H^d_I(R),E_R(R/\mathfrak{m}))$ is a finitely generated $\hat{R}$-module.   
 \item[(b)]  $\Ass_{\hat{R}} T_I(R) = \{\mathfrak{p} \in \Ass \hat{R} | 
 \dim \hat{R}/\mathfrak{p} = \dim R \text{ and } \dim \hat{R}/I \hat{R} + \mathfrak{p} = 0 \}.$
 \item[(c)] $K_{\hat{R}}(\hat{R}/Q_I(\hat{R})) \simeq T_I(R).$
 \end{itemize}
 \end{lemma}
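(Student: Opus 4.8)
The plan is to reduce everything to the completed case and then apply the structural isomorphism of Theorem \ref{3.3} together with the properties of the module $K(\cdot)$ collected in Proposition \ref{2.4}. Since $H^d_I(R)$ is Artinian it carries a unique $\hat R$-module structure (Lemma \ref{2.1}), so without loss of generality I may assume $R$ is complete, whence $T_I(R) = D(H^d_I(R))$.

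\smallskip

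\emph{Step 1 (part (a) and part (c) together).} By Theorem \ref{3.3} there is a natural isomorphism $H^d_I(R) \simeq H^d_{\mathfrak m}(R/Q_I(R))$ (in the complete case $Q_{I\hat R}(\hat R)=Q_I(R)$). Applying the Matlis duality functor $D(\cdot)$ and using Definition \ref{2.2}(A), I get
\[
T_I(R) = D\bigl(H^d_I(R)\bigr) \simeq D\bigl(H^d_{\mathfrak m}(R/Q_I(R))\bigr) = K\bigl(R/Q_I(R)\bigr) \simeq K_{\widehat{R/Q_I(R)}}.
\]
Since $R$ is complete, $R/Q_I(R)$ is complete of dimension $d$, so $K(R/Q_I(R)) = K_{R/Q_I(R)}$ is a finitely generated $R$-module; this proves (a) and simultaneously establishes (c), namely $K_{\hat R}(\hat R/Q_I(\hat R)) \simeq T_I(R)$.

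\smallskip

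\emph{Step 2 (part (b)).} By Step 1, $\Ass_{\hat R} T_I(R) = \Ass_{\hat R} K(\hat R/Q_I(\hat R))$. By Proposition \ref{2.4}(a), $\Ass_{\hat R} K(N) = (\Ass_{\hat R} N)_d$ for any finitely generated module $N$, so
\[
\Ass_{\hat R} T_I(R) = \bigl(\Ass_{\hat R} \hat R/Q_I(\hat R)\bigr)_d.
\]
Now by Lemma \ref{3.2} applied to $M = \hat R$, we have $\Ass_{\hat R} \hat R/Q_I(\hat R) = U$, where $U = \{\mathfrak p \in \Ass \hat R \mid \dim \hat R/\mathfrak p = d,\ \dim \hat R/(I\hat R + \mathfrak p) = 0\}$ by Definition \ref{3.1}(a). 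Every prime in $U$ already satisfies $\dim \hat R/\mathfrak p = d$, so $U_d = U$, and this is exactly the set described in (b). This finishes the proof.

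\smallskip

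\textbf{Anticipated obstacle.} The one point requiring a little care is the bookkeeping around completion in part (c): the statement is phrased intrinsically for $\hat R$ and $Q_I(\hat R)$, and I must check that the identification $H^d_I(R)\simeq H^d_{\mathfrak m}(\hat R/Q_I(\hat R))$ from Theorem \ref{3.3} is compatible (as $\hat R$-modules) with the $\hat R$-structure on $T_I(R)$ coming from Lemma \ref{2.1}(c), and that Definition \ref{2.2}(A) applies verbatim over $\hat R$ — both are routine but should be stated. Everything else is a direct concatenation of Theorem \ref{3.3}, Matlis duality, Definition \ref{2.2}, Lemma \ref{3.2}, and Proposition \ref{2.4}(a).
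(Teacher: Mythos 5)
Your proof is correct and takes essentially the same route the paper gestures at (the paper's own ``proof'' is the single sentence ``The statements follow by the definition and the auxiliary results above''): reduce to the complete case via the $\hat R$-structure on Artinian modules, apply Theorem~\ref{3.3} to identify $H^d_I(R)$ with $H^d_{\mathfrak m}(R/Q_I(R))$, take Matlis duals to land on $K_{R/Q_I(R)}$, and read off finite generation and the associated primes from Definition~\ref{2.2}, Proposition~\ref{2.4}(a), and Lemma~\ref{3.2}. The only cosmetic remark is that part~(a) also follows directly from Matlis duality (the dual of an Artinian module is a finitely generated $\hat R$-module) without routing through Theorem~\ref{3.3}, and in the degenerate case $Q_I(\hat R)=\hat R$ all three statements hold trivially since $H^d_I(R)=0$; neither point affects the correctness of your argument.
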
 

\begin{proof} The statements follow by the definition and the auxiliary 
results above. 
\end{proof}
 
For an $R$-module $M$ the natural map $R \to \Hom_R(M,M)$ is in general neither injective 
nor surjective. For the local cohomology module $H^d_I(R)$ we get a more precise picture.  
 
 \begin{theorem} \label{4.2} Let $I$ denote an ideal in a local ring $(R,\mathfrak{m})$ 
 with $d = \dim R.$ Let 
 \[
 \Phi: \hat{R} \to \Hom_{\hat{R}}(H^d_I(R), H^d_I(R)) 
 \]
 the natural homomorphism, where $H^d_I(R)$ is as an Artinian $R$-module considered as 
 an $\hat{R}$-module.
 \begin{itemize}
 \item[(a)] $\ker \Phi = Q_{I\hat{R}}(\hat{R}).$ 
 \item[(b)] $\Phi$ is surjective if and only if $\hat{R}/Q_{I\hat{R}}(\hat{R})$ satisfies $S_2.$ 
 \item[(c)] $\Hom_{\hat{R}}(H^d_I(R), H^d_I(R))$ is a finitely generated $\hat{R}$-module. 
 \item[(d)] $\Hom_{\hat{R}}(H^d_I(R), H^d_I(R))$ is a commutative semi-local Noetherian ring.
 \end{itemize}
 \end{theorem}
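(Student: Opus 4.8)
The plan is to reduce everything to the already-established isomorphism $H^d_I(R) \simeq H^d_{\mathfrak m\hat R}(\hat R/Q_{I\hat R}(\hat R))$ from Theorem \ref{3.3}, and then to transport the problem to the canonical module via Matlis duality. Set $\bar R = \hat R/Q_{I\hat R}(\hat R)$; note $\dim \bar R = d$ by the definition of $Q_{I\hat R}(\hat R)$, since it only throws away primary components of the zero ideal that are $\mathfrak p$-primary with $\dim \hat R/\mathfrak p = d$—wait, more carefully: $Q_{I\hat R}(\hat R)$ is the intersection of the $d$-dimensional primary components $\mathfrak p$ with $\dim \hat R/I\hat R+\mathfrak p=0$, so $\bar R$ retains exactly those minimal primes, hence $\dim\bar R=d$. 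Then $H^d_I(R)\simeq H^d_{\mathfrak m\bar R}(\bar R)$ as $\hat R$-modules (equivalently as $\bar R$-modules, the action factoring through $\bar R$), and Matlis duality over $\bar R$ gives $T_I(R)=D(H^d_I(R))\simeq K_{\bar R}$, the canonical module of $\bar R$, which is Lemma \ref{4.1}(c).

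First I would observe that the endomorphism ring of an Artinian module $A$ over a complete local ring and the endomorphism ring of its Matlis dual $D(A)$ coincide: $\Hom_{\hat R}(A,A)\simeq \Hom_{\hat R}(D(A),D(A))$, because $D$ is a faithful exact contravariant self-equivalence on the category of modules that are either Artinian or finitely generated (Matlis duality), and it carries the natural map $m_r$ to $m_r$. Applying this with $A=H^d_I(R)$ over $\bar R$ turns $\Phi$ into the natural map $\bar R\to\Hom_{\bar R}(K_{\bar R},K_{\bar R})$. Now Proposition \ref{2.4}(d), applied to the complete local ring $\bar R$ (which is a factor of a Gorenstein ring by Cohen's structure theorem), tells us exactly that the kernel of this map is $0_d$ computed in $\bar R$, and that the map is surjective precisely when $\bar R$ satisfies $S_2$. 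But the zero ideal of $\bar R$ has no embedded primes among those killed—more precisely $(0_{\bar R})_d = 0$ since every associated prime of $\bar R=\hat R/Q_{I\hat R}(\hat R)$ is $d$-dimensional by Lemma \ref{3.2} (the set $U$). Hence the natural map $\bar R\to\Hom_{\bar R}(K_{\bar R},K_{\bar R})$ is injective, and pulling back to $\hat R$ gives $\ker\Phi = Q_{I\hat R}(\hat R)$, which is (a); and (b) is immediate since $S_2$-ness of $\hat R/Q_{I\hat R}(\hat R)$ is literally the stated condition.

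For (c), the module $\Hom_{\bar R}(K_{\bar R},K_{\bar R})$ is a finitely generated $\bar R$-module because $K_{\bar R}$ is finitely generated over the Noetherian ring $\bar R$ and $\Hom$ of two finitely generated modules over a Noetherian ring is finitely generated; transporting back, $\Hom_{\hat R}(H^d_I(R),H^d_I(R))$ is finitely generated over $\hat R$. For (d): commutativity follows because the image of $\Phi$ is commutative and, by a standard argument (the endomorphism ring embeds into $\Hom_{\bar R}(K_{\bar R},K_{\bar R})$, which by the $S_2$-ification is identified with the $S_2$-ification $\widetilde{\bar R}$ of $\bar R$, a commutative ring finite over $\bar R$); alternatively one invokes that $\Hom_R(K_M,K_M)\simeq K_{K_M}$ is itself a canonical module and hence commutative. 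Being a finitely generated module over the Noetherian ring $\hat R$, the endomorphism ring is Noetherian; being finite over the complete (hence semi-local, indeed local) ring $\hat R$, it is semi-local.

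The main obstacle I anticipate is the clean identification of $\Hom_{\bar R}(H^d_I(R),H^d_I(R))$ with $\Hom_{\bar R}(K_{\bar R},K_{\bar R})$ respecting the natural maps from $\bar R$—one must check that Matlis duality is being applied over the right ring and that the induced $\hat R$-structures match the ones fixed in Lemma \ref{2.1}, so that $\ker\Phi$ really is the preimage in $\hat R$ of the kernel computed over $\bar R$. Once that bookkeeping is in place, everything else is a direct citation of Proposition \ref{2.4}(d) together with elementary finiteness facts.
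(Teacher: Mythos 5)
Your proposal is correct and follows essentially the same route as the paper: reduce to the complete case, invoke Theorem \ref{3.3} to identify $H^d_I(R)$ with $H^d_{\mathfrak{m}}(R/Q)$, pass by Matlis duality to the endomorphism ring of the canonical module $K_{R/Q}$, and then quote the Aoyama/Aoyama--G\^oto/Hochster--Huneke results encapsulated in Proposition \ref{2.4}(d). Your extra observation that $(0_{\bar R})_d=0$ because every associated prime of $\bar R=\hat R/Q_{I\hat R}(\hat R)$ is $d$-dimensional (Lemma \ref{3.2}) is exactly the paper's computation $\Ann_R K_{R/Q}=Q_d=Q$, so the bookkeeping you flag as the main obstacle is handled just as the paper does it.
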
  
 
 \begin{proof} First note that as $H^d_I(R)$ is an Artinian $R$-module so $H^d_I(R) \simeq 
 H^d_{I\hat{R}}(\hat{R})$ (see \ref{2.1} for the detail). That is, without loss of generality we may 
 assume that $R$ is a complete local ring.  By virtue of Theorem \ref{3.3} there is the natural 
 isomorphism $H^d_I(R) \simeq H^d_{\mathfrak{m}}(R/Q), Q = Q_I(R).$ Then 
 \[
 K_{R/Q} \simeq D(H^d_{\mathfrak{m}} (R/Q)) \simeq \Hom_R(H^d_I(R), E_R(R/\mathfrak{m})).
 \] 
 Because $H^d_I(R)$ is Artinian the Matlis' duality provides an isomorphism 
 \[
 \Hom_R(H^d_I(R), H^d_I(R)) \simeq \Hom_R(K_{R/Q},K_{R/Q}).
 \]
 Therefore the kernel of $\Phi$ equals to $\Ann_R K_{R/Q} = Q_d,$ which proves (a). Because the 
 endomorphism ring of $H^d_I(R)$ is isomorphic to the endomorphism ring of 
 the canonical module of $K_{R/Q}$ the results in (b), (c) and (d) are shown by Aoyama 
 (see \cite[Proposition 1.2]{A}), Aoyama and 
 G\^{o}to (see \cite[Theorem 3.2]{A-G}) and Hochster and Huneke (see \cite[(2.2)]{HH}).
 \end{proof} 

 Note that the ideal $J \subset R$ as considered in Theorem \ref{0.1} (b) in the case of a complete local ring 
 $(R,\mathfrak{m})$ coincides  with $Q_I(R)$ in Theorem \ref{4.2}.
  In the next we want to relate some homological properties of $T_I(R)$ with those of 
 the endomorphism ring $\Hom_R(H^d_I(R), H^d_I(R))$ resp. $\hat{R}/Q_{I\hat{R}}(\hat{R}).$  
 
 \begin{theorem} \label{4.3} Let $I$ be an ideal in $(R,\mathfrak m),$ a complete local ring and $\dim R = d.$ 
 For an integer $r \geq 2$ we have the following statements: 
 \begin{itemize} 
 \item[(a)] Suppose $R/Q_I(R)$ has $S_2.$ Then $T_I(R)$ satisfies the condition $S_r$ if and only if
  $H^i_{\mathfrak{m}}(R/Q_I(R)) = 0$ for $d-r+2 \leq i < d.$ 
 \item[(b)] $R/Q_I(R)$ satisfies the condition $S_r$ if and only if $H^i_{\mathfrak{m}}(T_I(R)) = 0$ for 
 $d-r+2 \leq i < d$ and $R/Q_I(R) \simeq \Hom_R(H^d_I(R), H^d_I(R)).$  
 \end{itemize} 
 In particular, if $R/Q_I(R)$ has $S_2$ it is a Cohen-Macaulay ring if and only if the module $T_I(R)$ is Cohen-      Macaulay.
 \end{theorem}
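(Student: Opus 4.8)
The plan is to reduce everything to a statement about the canonical module $K_{R/Q}$, where $Q = Q_I(R)$, via the Matlis-dual identifications already established. The key observation is that, after passing to the complete ring, Theorem \ref{3.3} gives $H^d_I(R) \simeq H^d_{\mathfrak{m}}(R/Q)$, so by Local Duality $T_I(R) \simeq D(H^d_I(R)) \simeq K_{R/Q}$. Thus statement (a) becomes a statement purely about when $K_{R/Q}$ satisfies $S_r$, and statement (b) becomes a statement about when $R/Q$ satisfies $S_r$ together with the endomorphism-ring isomorphism. Write $\overline{R} = R/Q$; note $\overline{R}$ is a complete local ring of dimension $d$ which is equidimensional (its associated primes are the $\mathfrak{p}$ with $\dim R/\mathfrak{p} = d$, by Lemma \ref{3.2}), and $K_{\overline{R}}$ is a finitely generated $\overline{R}$-module with $\Supp K_{\overline{R}} = \Spec \overline{R}$.

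First I would record the general duality principle relating the conditions $S_r$ on $M$, on $K_M$, and the vanishing of intermediate local cohomology. For a finitely generated module $M$ of dimension $d$ over a complete local ring, admitting a canonical module, one has the well-known equivalence: $M$ satisfies $S_2$ and $H^i_{\mathfrak{m}}(M) = 0$ for $d - r + 2 \le i < d$ if and only if $K_M$ satisfies $S_r$; and dually $M$ satisfies $S_r$ if and only if $K_M$ satisfies $S_2$ and $H^i_{\mathfrak{m}}(K_M) = 0$ for $d - r + 2 \le i < d$. (These are the Schenzel-type criteria; I would cite \cite{Sch} and use that $K_{K_M} \simeq K_M$ when $M$ has $S_2$, together with Local Duality applied to $M$ and to $K_M$.) Since $K_{\overline{R}}$ automatically has $S_2$ by Proposition \ref{2.4}(c), the hypotheses in the duality criteria that mention ``$S_2$ on the canonical side'' are free of charge.

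Now for (a): assume $\overline{R}$ has $S_2$. Apply the first form of the duality principle with $M = \overline{R}$: $\overline{R}$ has $S_2$ (given) and $H^i_{\mathfrak{m}}(\overline{R}) = 0$ for $d-r+2 \le i < d$ if and only if $K_{\overline{R}} = T_I(R)$ has $S_r$. That is exactly the claimed equivalence. For (b): apply the second form with $M = \overline{R}$: $\overline{R}$ has $S_r$ if and only if $K_{\overline{R}}$ has $S_2$ (automatic) and $H^i_{\mathfrak{m}}(K_{\overline{R}}) = H^i_{\mathfrak{m}}(T_I(R)) = 0$ for $d - r + 2 \le i < d$. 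But one also needs the extra clause $\overline{R} \simeq \Hom_R(H^d_I(R), H^d_I(R))$; here I would invoke Theorem \ref{4.2}(b), which says this isomorphism holds precisely when $\overline{R} = R/Q_I(R)$ has $S_2$, and since $r \ge 2$, $S_r \Rightarrow S_2$, so under the hypothesis ``$\overline{R}$ has $S_r$'' the endomorphism-ring identification comes for free; conversely if both the cohomology vanishing and the endomorphism isomorphism hold, then in particular $\overline{R}$ has $S_2$ (by the ``only if'' of Theorem \ref{4.2}(b)), and then the duality criterion upgrades this to $S_r$. Finally, the ``in particular'' clause: a finitely generated module over a local ring that satisfies $S_r$ for all $r$ (equivalently $S_{d+1}$, equivalently $S_d$) is Cohen-Macaulay; applying (a) with $r$ large, $T_I(R)$ is Cohen-Macaulay iff $H^i_{\mathfrak{m}}(\overline{R}) = 0$ for all $0 \le i < d$, i.e.\ iff $\overline{R} = R/Q_I(R)$ is Cohen-Macaulay.

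I expect the main obstacle to be the clean statement and citation of the duality principle in the precise asymmetric form needed (the two directions are not literally symmetric because of the $S_2$ hypotheses), and making sure the range $d-r+2 \le i < d$ matches the standard references after the index shift that arises when dualizing $\Ext^{n-d}_S(-, S)$ against $H^i_{\mathfrak m}(-)$. Once that bookkeeping is pinned down, the rest is a direct substitution of the identifications $T_I(R) \simeq K_{R/Q_I(R)}$ and $H^d_I(R) \simeq H^d_{\mathfrak m}(R/Q_I(R))$ together with Theorem \ref{4.2}.
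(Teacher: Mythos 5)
Your proof takes essentially the same route as the paper's: identify $T_I(R)\simeq K_{R/Q}$ and $\End H^d_I(R)\simeq\End K_{R/Q}$ via Theorem~\ref{3.3} and Matlis duality, then invoke the Schenzel criterion (the paper cites \cite[1.14]{Sch} applied to $M=R/Q$ for (a) and to $M=K_{R/Q}$ for (b)) together with Theorem~\ref{4.2}(b) for the endomorphism clause. One caveat in your write-up: the ``second form'' of the duality principle as you state it --- ``$M$ satisfies $S_r$ iff $K_M$ satisfies $S_2$ and $H^i_{\mathfrak m}(K_M)=0$ for $d-r+2\le i<d$'' --- is false as written, since $K_M$ always satisfies $S_2$ (Proposition~\ref{2.4}(c)), which would make $S_r$ on $M$ equivalent to the vanishing alone; already for $r=2$ this would assert every finitely generated module has $S_2$. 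The correct formulation puts the $S_2$ hypothesis on $M$ itself (equivalently, requires $M\simeq\Hom_R(K_M,K_M)$), which is exactly what applying \cite[1.14]{Sch} with $M=K_{R/Q}$ and then identifying $K_{K_{R/Q}}\simeq\Hom_R(K_{R/Q},K_{R/Q})$ achieves. You do notice this and patch it by invoking Theorem~\ref{4.2}(b), so the final argument closes correctly, but the intermediate ``principle'' should not be stated in that form; as written, your phrase ``the duality criterion upgrades this to $S_r$'' is circular if it refers to the misstated form, and only becomes a valid step once the $S_2$-on-$M$ hypothesis is made explicit.
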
 
 
\begin{proof} By our conventions and definitions it 
follows that $T_I(R) \simeq K_{R/Q},$ where $Q = Q_I(R),$ and $ R/Q \simeq \Hom_R(H^d_I(R), H^d_I(R))$.
Then the statement in (a) resp. in (b) follows by virtue of \cite[1.14]{Sch} for $M = R/Q$ resp. $M= K_{R/Q}.$
\end{proof}
 
 \section{On Connectedness and Indecomposability}
 The next part of our investigations is to characterize the number of the maximal ideals of the 
 endomorphism ring $\Hom_{\hat{R}}(H^d_I(R), H^d_I(R)), d = \dim R$ (see \ref{4.2}). To this end we need 
 a few more preparations. First we recall a definition given by Hochster and Huneke (see \cite[(3.4)]{HH}). 
 
 \begin{definition} \label{5.1} Let $(R,\mathfrak{m})$ denote a local ring. 
 We denote by $\mathbb{G}(R)$ the undirected graph whose vertices are primes
$\mathfrak{p} \in \Spec R$ such that $\dim R = \dim R/\mathfrak{p},$ and
two distinct vertices $\mathfrak{p}, \mathfrak{q}$ are joined by
an edge if and only if $(\mathfrak{p}, \mathfrak{q})$ is an ideal of
height one. 
 \end{definition} 
  
  Next we are interested in the connectedness of $\mathbb{G}(R).$ That  is characterized in 
  the following statement. To this end we refer to the notion of connectedness in codimension one 
  of $\Spec R$ as defined by Hartshorne (see \cite{rH}). 
  
 \begin{proposition} \label{5.2} Let $(R, \mathfrak{m})$ denote a local ring with $d = dim R.$ 
 Then the following conditions are equivalent:
 \begin{itemize} 
 \item[(i)] The graph $\mathbb{G}(R)$ is connected. 
 \item[(ii)] $\Spec R/0_d$ is connected in codimension one.
 \item[(iii)] For every ideal $J R/0_d$ of height at least two, $\Spec (R/0_d) \setminus V(JR/0_d)$ is 
 connected.
 \end{itemize} 
 \end{proposition}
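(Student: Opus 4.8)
The plan is to work throughout with $\bar R := R/0_d$. This ring is equidimensional of dimension $d$: its minimal primes are precisely the primes $\mathfrak p$ with $\dim R/\mathfrak p = d$, i.e. precisely the vertices of $\mathbb{G}(R)$ (each such $\mathfrak p$ is minimal in $R$, hence in $\Ass R$, hence automatically contains $0_d$), and for two such primes $\mathfrak p,\mathfrak q$ the ideal $(\mathfrak p,\mathfrak q)$ has the same height in $R$ as in $\bar R$. Hence $\mathbb{G}(R) = \mathbb{G}(\bar R)$, and as (ii) and (iii) already mention only $\bar R$ we may assume $R = \bar R$; we may also assume $d \ge 2$, the conditions being trivially satisfied for $d \le 1$. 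In this equidimensional setting (which in the situations of interest --- $R$ a quotient of a Cohen-Macaulay ring, in particular $R$ complete --- is moreover catenary) a closed set $V(\mathfrak a) \subseteq \Spec R$ has $\dim V(\mathfrak a) \le d-2$ exactly when every prime minimal over $\mathfrak a$ has height $\ge 2$, i.e. exactly when $\height \mathfrak a \ge 2$; and every closed set of dimension $\le d-2$ arises this way. So (ii) --- that $\Spec R \setminus Z$ be connected for every closed $Z$ with $\dim Z \le d-2$ --- is a verbatim reformulation of (iii), and it remains to prove (i) $\Leftrightarrow$ (ii).

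For (i) $\Rightarrow$ (ii) I argue by contraposition. Suppose $\Spec R$ is not connected in codimension one, say $\Spec R \setminus Z = U_1 \sqcup U_2$ with $Z$ closed, $\dim Z \le d-2$, and $U_1,U_2$ nonempty open. Since $R$ is equidimensional of dimension $d$, $Z$ contains no minimal prime, so $\Spec R \setminus Z$ is dense; hence $\overline{U_1} \cup \overline{U_2} = \Spec R$ and $\overline{U_1} \cap \overline{U_2} \subseteq Z$. Every component $V(\mathfrak p)$, $\mathfrak p \in \operatorname{Min} R$, is not contained in $Z$ and so meets $U_1 \cup U_2$, and being irreducible of dimension $d > \dim Z$ it cannot meet both $U_i$ (otherwise $V(\mathfrak p) \subseteq \overline{U_1} \cap \overline{U_2} \subseteq Z$). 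This partitions $\operatorname{Min} R$ into classes $P_1,P_2$, both nonempty because each $U_i$ is. No edge of $\mathbb{G}(R)$ joins $P_1$ to $P_2$: were $\mathfrak p \in P_1$, $\mathfrak q \in P_2$ with $\height(\mathfrak p+\mathfrak q)=1$, a prime $\mathfrak r$ of height one containing $\mathfrak p+\mathfrak q$ would satisfy $\dim R/\mathfrak r = d-1 > \dim Z$ while lying in $V(\mathfrak p) \cap V(\mathfrak q) \subseteq \overline{U_1} \cap \overline{U_2} \subseteq Z$, a contradiction. So $\mathbb{G}(R)$ is disconnected.

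For (ii) $\Rightarrow$ (i) I again argue by contraposition. Suppose $\mathbb{G}(R)$ is disconnected: $\operatorname{Min} R = P_1 \sqcup P_2$ with $P_1,P_2$ nonempty and no edge between them. Set $X_i := \bigcup_{\mathfrak p \in P_i} V(\mathfrak p)$, so $X_1 \cup X_2 = \Spec R$, and $Z := X_1 \cap X_2$. Any prime $\mathfrak r \in Z$ with $\height \mathfrak r \le 1$ yields a contradiction: if $\height \mathfrak r = 0$ then $\mathfrak r$ is a minimal prime in $P_1 \cap P_2 = \emptyset$; if $\height \mathfrak r = 1$ then $\mathfrak r$ contains distinct minimal primes $\mathfrak p_1 \in P_1$, $\mathfrak p_2 \in P_2$, forcing $1 \le \height(\mathfrak p_1+\mathfrak p_2) \le \height \mathfrak r = 1$, a forbidden edge. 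Hence $\dim Z \le d-2$. Finally $\Spec R \setminus Z = (X_1 \setminus Z) \sqcup (X_2 \setminus Z)$ is a disjoint union of two sets closed in $\Spec R \setminus Z$, each nonempty since it contains the generic point of any component belonging to $P_i$ and no such point lies in $Z$. So $\Spec R \setminus Z$ is disconnected and (ii) fails.

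The purely topological steps --- density of $\Spec R \setminus Z$, the closure identities for $U_1,U_2$, the fact that $X_i \setminus Z$ is closed in $\Spec R \setminus Z$, and the elementary set identities --- are routine. The step with real content, and the one I expect to be the main obstacle, is the translation between the graph condition ``$(\mathfrak p,\mathfrak q)$ has height one'' and the topological one ``a prime of codimension one, hence avoiding every set of dimension $\le d-2$, lies over $\mathfrak p+\mathfrak q$'': this rests on the coincidence of height and codimension in $R/0_d$ (equidimensionality, and catenariness in the cases that matter), which is precisely the circle of ideas exploited by Hochster and Huneke, cf. \cite{HH}.
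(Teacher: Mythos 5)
Your proof is correct, and since the paper disposes of this proposition with a one-line citation to Hartshorne's definition and to Hochster--Huneke (3.6), what you have done is to supply in full the argument that the authors treat as routine. Your reduction to $\bar R = R/0_d$ is sound (the vertices of $\mathbb G(R)$ are exactly the primes of maximal dimension, hence minimal, hence automatically over $0_d$, so $\mathbb G(R)=\mathbb G(\bar R)$), and the two contraposition arguments for (i)$\Leftrightarrow$(ii) are the standard ones: split the minimal primes by which piece of $\Spec \bar R\setminus Z$ their component meets, and conversely glue the components over each connected piece of $\mathbb G$ and observe that the overlap lives in height $\ge 2$. You are also right to isolate, and to flag explicitly, the one step that is not a pure definition-chase: the passage between ``$\operatorname{height}\ge 2$'' (used in (iii) and in the edge condition of $\mathbb G$) and ``$\dim \le d-2$'' (the topological condition behind ``connected in codimension one''). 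One implication, $\operatorname{height}\mathfrak a\ge 2 \Rightarrow \dim V(\mathfrak a)\le d-2$, is free in any local ring; the converse uses that $\bar R$ is equidimensional and catenary, which holds in the complete case (and, more generally, for quotients of Cohen--Macaulay rings) that the paper actually needs in Theorem 5.3 and that Hochster--Huneke work in. So your proof is both complete and, for the setting where the proposition is applied, at the right level of generality; it is essentially the same route as the cited source, merely spelled out.
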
  
 
 \begin{proof} By the definitions (see \ref{5.1} and \cite{rH}) this is easily seen. See also 
 \cite[(3.6)]{HH}. 
 \end{proof}
 
 Next we describe when the endomorphism ring of $H^d_I(R), d = \dim R,$ is a local ring. 
 We call an $R$-module $X$ indecomposable if it is not the direct sum of two non-trivial 
 submodules.  
  
 \begin{theorem} \label{5.3} Let $(R, \mathfrak{m})$ denote a complete local ring and 
 $d = \dim R.$ For an ideal $I \subset R$ the following conditions are equivalent: 
 \begin{itemize} 
 \item[(i)] $H^d_I(R)$ is indecomposable. 
 \item[(ii)] $T_I(R)$ is indecomposable.
 \item[(iii)] The endomorphism ring of $H^d_I(R)$ is a local ring. 
 \item[(iv)] The graph $\mathbb{G}(R/Q_I(R))$ is connected.
 \end{itemize} 
 \end{theorem}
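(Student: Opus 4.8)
The strategy is to replace $R$ by the auxiliary ring $A := R/Q_I(R)$ and to read off all four conditions from it. We may assume $H^d_I(R)\neq 0$, i.e.\ that the Hartshorne--Lichtenbaum vanishing fails (otherwise every item degenerates to a statement about the zero module and the empty graph). Then $U\neq\emptyset$, and since by Lemma \ref{3.2} each $\mathfrak p\in U=\Ass_R A$ has $\dim R/\mathfrak p=d$, the ring $A$ is $d$-dimensional, equidimensional and without embedded primes, so $0_d(A)=0$. By Theorem \ref{3.3} we have a natural isomorphism $H^d_I(R)\simeq H^d_{\mathfrak m}(A)$, and by Lemma \ref{4.1}(c) its Matlis dual is $T_I(R)\simeq K_A$.

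First I would dispose of $(i)\Leftrightarrow(ii)$. As $R$ is complete, $D(\cdot)$ is an additive anti-equivalence between the Artinian and the Noetherian $R$-modules, so it preserves direct-sum decompositions and hence indecomposability; since $T_I(R)=D(H^d_I(R))$ with $H^d_I(R)$ Artinian, $(i)$ and $(ii)$ are equivalent.

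Next, $(i)\Leftrightarrow(iii)$. For a module over any ring, indecomposability is equivalent to its endomorphism ring having no idempotents besides $0$ and $1$, an idempotent being exactly the projector onto a direct summand. By Theorem \ref{4.2}(c),(d) the ring $E:=\Hom_R(H^d_I(R),H^d_I(R))$ is commutative, Noetherian, semi-local and module-finite over $R$. Since $R$ is complete, hence Henselian, such an algebra is a finite product of local rings, and this product is local exactly when it has a single factor, i.e.\ exactly when it has no nontrivial idempotents; this yields $(i)\Leftrightarrow(iii)$.

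Finally, $(iii)\Leftrightarrow(iv)$ (equivalently $(ii)\Leftrightarrow(iv)$). Applying $D(\cdot)$ and using Theorem \ref{3.3} gives a ring isomorphism $\Hom_R(H^d_I(R),H^d_I(R))\simeq\Hom_A(K_A,K_A)$. Now invoke the results of Hochster and Huneke \cite{HH}: for the complete local ring $A$, the canonical module $K_A$ is indecomposable, equivalently $\Hom_A(K_A,K_A)$ is local, if and only if $\mathbb G(A)=\mathbb G(R/Q_I(R))$ is connected; by Proposition \ref{5.2} this may also be phrased as $\Spec A$ being connected in codimension one. The main obstacle I anticipate is precisely this last step: one must transport the Hochster--Huneke criterion, stated there for $H^{\dim R}_{\mathfrak m}(R)$ and $\mathbb G(R)$, to the ring $A=R/Q_I(R)$, checking compatibility of the two endomorphism-ring structures under the identification $H^d_I(R)\simeq H^d_{\mathfrak m}(A)$ and the legitimacy of passing from ``no nontrivial idempotents'' to ``local'', which again rests on $\Hom_A(K_A,K_A)$ being module-finite over the Henselian ring $A$. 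The other equivalences are formal consequences of Matlis duality together with Theorems \ref{3.3} and \ref{4.2}.
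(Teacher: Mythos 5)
Your proposal is correct and follows essentially the same route as the paper: reduce via Theorem \ref{3.3} and Matlis duality to the complete $d$-dimensional ring $A=R/Q_I(R)$, identify $H^d_I(R)\simeq H^d_{\mathfrak m}(A)$, $T_I(R)\simeq K_A$, $\End H^d_I(R)\simeq\End_A K_A$, and then invoke Hochster--Huneke \cite[(3.6)]{HH}. You unpack the equivalences (i)$\Leftrightarrow$(ii) (Matlis anti-equivalence) and (i)$\Leftrightarrow$(iii) (idempotents in a module-finite algebra over a Henselian ring) explicitly, whereas the paper attributes all four equivalences for $H^d_{\mathfrak m}(A)$ directly to \cite[(3.6)]{HH}; this is purely a difference of exposition, and the ``obstacle'' you flag at the end --- transporting the criterion from $R$ to $A$ --- is already resolved by the reduction step you carried out at the start.
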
 
 
 \begin{proof} We may always assume that $Q = Q_I(R)$ is a proper ideal. In the case of $Q = R$ 
 there is nothing to show. As it follows by  above investigations we have the following isomorphisms 
 \[
 H^d_{\mathfrak{m}}(R/Q) \simeq H^d_I(R), K_{R/Q} \simeq T_I(R) \text{ and } \End H^d_{\mathfrak{m}}(R/Q) 
 \simeq \End H^d_I(R),
 \] 
 where  $\End$ denotes the endomorphism ring. That is, we have reduced the 
 proof of the statement to the corresponding result for $H^d_{\mathfrak{m}}(R/Q).$ Note that 
 $d = \dim R/Q.$ Then the equivalence of the conditions is proved by Hochster and Huneke (see 
 \cite[(3.6)]{HH}). 
 \end{proof}

 Now we shall describe $t,$ the number of connected components of  $\mathbb{G}(R/Q_I(R)).$ 
 
 \begin{definition} \label{5.4} Let $I$ be an ideal in a local ring $(R,\mathfrak{m}).$ Suppose that  $Q = Q_I(R)$ is  
 a proper ideal. Let $ \mathbb{G}_i, i = 1,\ldots,t,$ denote the connected components of 
 $ \mathbb{G}(R/Q).$  Let $Q_i, i = 1,\ldots,t,$ denote the intersection of all $ \mathfrak{p}$-primary 
 components of a reduced minimal primary decomposition of $Q$ such that $ \mathfrak{p} \in 
 \mathbb{G}_i.$ Then $Q = \cap_{i=1}^t Q_i$ and $ \mathbb{G}(R/Q_i) = \mathbb{G}_i, i = 1,\ldots,t,$ is 
 connected. Moreover, let $I_i, i = 1,\ldots,t,$ denote the image of the ideal $I$ in $R/Q_i.$ 
 \end{definition}
 
 \begin{theorem} \label{5.5} Let $I$ denote an ideal of a complete local ring $(R, \mathfrak{m})$ 
 with $d = \dim R \geq 2.$ Then 
\[ 
 \End H^d_I(R) \simeq \End H^d_{I_1}(R/Q_1) \times \ldots \times \End H^d_{I_t}(R/Q_t)
 \] 
 is a semi-local ring, $\End H^d_{I_i}(R/Q_i), i = 1,\ldots,t,$ is a local ring and therefore 
 $t$ is equal  to the number of maximal ideals of $\End H^d_I(R).$
 \end{theorem}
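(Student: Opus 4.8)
The plan is to reduce everything to the ring $\bar R := R/Q$ with $Q = Q_I(R)$ (which we may assume to be proper), exactly as in the proofs of Theorems~\ref{3.3} and~\ref{4.2}: there one has $H^d_I(R) \simeq H^d_{\mathfrak m}(\bar R)$ and, after Matlis dualizing, $\End H^d_I(R) \simeq \End K_{\bar R}$. Note that $\bar R$ is equidimensional of dimension $d$ --- its associated primes are exactly the vertices of $\mathbb G(\bar R)$, all of dimension $d$ by Lemma~\ref{3.2} --- and catenary, being a complete local ring; the same applies to each $R/Q_i$. First I would establish the module decomposition $H^d_{\mathfrak m}(\bar R) \simeq \bigoplus_{i=1}^t H^d_{\mathfrak m}(R/Q_i)$. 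From $Q = \bigcap_i Q_i$ one obtains an exact sequence $0 \to \bar R \to \bigoplus_{i=1}^t R/Q_i \to C \to 0$, and localizing shows $\Supp_R C \subseteq \bigcup_{i \neq j} V(Q_i + Q_j)$. If $\mathfrak r$ is a minimal prime over $Q_i + Q_j$ with $i \neq j$, it contains a minimal prime $\mathfrak p \in \mathbb G_i$ of $Q_i$ and a minimal prime $\mathfrak q \in \mathbb G_j$ of $Q_j$; since $\mathbb G_i$ and $\mathbb G_j$ are distinct connected components, $\mathfrak p$ and $\mathfrak q$ are not joined by an edge, so $\height (\mathfrak p + \mathfrak q) \neq 1$, while being distinct minimal primes forces $\height (\mathfrak p + \mathfrak q) \geq 1$; hence $\height \mathfrak r \geq 2$, and since $\bar R$ is equidimensional and catenary, $\dim R/\mathfrak r \leq d-2$. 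Thus $\dim_R C \leq d-2$, so $H^i_{\mathfrak m}(C) = 0$ for $i \geq d-1$ (this is where $d \geq 2$ enters), and the long exact local cohomology sequence yields the decomposition.

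Next I would identify the summands. For each $i$ the associated primes of $R/Q_i$ are exactly the primes of $\mathbb G_i$, a subset of the vertex set of $\mathbb G(\bar R)$, hence of $U$ by Lemma~\ref{3.2}; so every $\mathfrak p \in \mathbb G_i$ satisfies $\dim R/(I+\mathfrak p) = 0$. Rephrased in $R/Q_i$: every (minimal, hence associated) prime $\bar{\mathfrak p}$ of $R/Q_i$ with $\dim (R/Q_i)/\bar{\mathfrak p} = d$ has $\dim (R/Q_i)/(I_i + \bar{\mathfrak p}) = 0$. Therefore $Q_{I_i}(R/Q_i) = 0$, and Theorem~\ref{3.3} applied to the complete local ring $R/Q_i$ gives $H^d_{I_i}(R/Q_i) \simeq H^d_{\mathfrak m}(R/Q_i)$. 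Combining with the previous step, $H^d_I(R) \simeq \bigoplus_{i=1}^t H^d_{I_i}(R/Q_i)$ as $R$-modules.

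To upgrade this to a ring isomorphism, I would show $\Hom_R(H^d_{\mathfrak m}(R/Q_i), H^d_{\mathfrak m}(R/Q_j)) = 0$ for $i \neq j$, so that the endomorphism ring of the direct sum --- a priori a $t \times t$ matrix ring over these Hom-groups --- collapses to the product of its diagonal entries. By Matlis duality (everything in sight is Artinian over the complete ring $R$) this Hom-group is isomorphic to $\Hom_R(K_{R/Q_j}, K_{R/Q_i})$. Given $\varphi$ in the latter, $\im \varphi \subseteq K_{R/Q_i}$ forces $\Ass_R \im \varphi \subseteq \Ass_R K_{R/Q_i} = (\Ass_R R/Q_i)_d = \mathbb G_i$ by Proposition~\ref{2.4}(a), while $\Supp_R \im \varphi \subseteq \Supp_R K_{R/Q_j} = V(Q_j)$ by Proposition~\ref{2.4}(b). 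But no $\mathfrak p \in \mathbb G_i$ contains $Q_j$: it would then contain a minimal prime $\mathfrak q \in \mathbb G_j$ of $Q_j$, forcing $\mathfrak p = \mathfrak q$ (both minimal of dimension $d$), contradicting $\mathbb G_i \neq \mathbb G_j$. Hence $\Ass_R \im \varphi = \emptyset$, so $\varphi = 0$, and $\End H^d_I(R) \simeq \prod_{i=1}^t \End H^d_{I_i}(R/Q_i)$ as rings.

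Finally, each factor $\End H^d_{I_i}(R/Q_i)$ is a local ring: this is Theorem~\ref{5.3} applied to $I_i \subset R/Q_i$ (a complete local ring of dimension $d \geq 2$), since its governing condition is the connectedness of $\mathbb G\big((R/Q_i)/Q_{I_i}(R/Q_i)\big) = \mathbb G(R/Q_i) = \mathbb G_i$, which holds by construction of the $\mathbb G_i$. Each factor is nonzero because $H^d_{I_i}(R/Q_i) \simeq H^d_{\mathfrak m}(R/Q_i) \neq 0$ (as $\dim R/Q_i = d$). A finite product of $t$ local rings is semi-local with precisely $t$ maximal ideals; together with Theorem~\ref{4.2}(c),(d) this finishes the proof. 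The main obstacle in this plan is the dimension estimate $\dim_R C \leq d-2$ of the first step, where the combinatorics of the connected components must be married to the equidimensionality and catenarity of $\bar R$; relatedly, the vanishing of the off-diagonal Hom-groups is the real content that makes the endomorphism ring split as a genuine direct product rather than merely a matrix ring.
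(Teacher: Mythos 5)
Your proof is correct and follows essentially the same route as the paper: reduce to $R/Q$ via Theorem~\ref{3.3}, decompose $H^d_{\mathfrak m}(R/Q)$ as $\bigoplus_i H^d_{\mathfrak m}(R/Q_i)$ using that $\dim R/(Q_i+Q_j)\le d-2$ for $i\ne j$, kill the off-diagonal Hom-groups by an $\Ass/\Supp$ computation after Matlis duality, and invoke Theorem~\ref{5.3} for each factor. The only packaging differences are that you replace the paper's inductive chain of short exact sequences with the single sequence $0\to R/Q\to\bigoplus_i R/Q_i\to C\to 0$, and you spell out the equidimensionality and catenarity of $R/Q$ needed to pass from $\height\ge 2$ to $\dim\le d-2$, a step the paper leaves implicit.
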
   
 
 \begin{proof} As in the proof in Theorem \ref{5.3} we have $\End H^d_{\mathfrak{m}}(R/Q) 
 \simeq \End H^d_I(R).$ For an integer $1 \leq i \leq t$ we define $ \tilde{Q}_i = \cap_{j=1}^i Q_j,$ 
 in particular $ \tilde{Q}_t = Q.$ Then there is the short exact sequence 
 \[
 0 \to R/\tilde{Q}_{i+1} \to R/\tilde{Q}_i \oplus R/Q_{i+1} \to R/(\tilde{Q}_i + Q_{i+1}) \to 0.
 \]
 Because $ \mathbb{G}_{i+1}$ and $ \mathbb{G}_j$ for $j = 1,\ldots,i,$ are not connected it 
 follows by the definition that $\height(\tilde{Q}_i + Q_{i+1}) \geq 2$ and therefore 
 $\dim R/(\tilde{Q}_i + Q_{i+1}) \leq d-2.$ Whence the short exact sequence induces 
 isomorphisms $H^d_I(R/\tilde{Q}_{i+1}) \simeq H^d_I(R/\tilde{Q}_i) \oplus H^d_I(R/Q_{i+1})$  
 and by induction 
\[ 
 H^d_I(R/Q) \simeq \oplus_{i=1}^t H^d_I(R/Q_i) .
 \] 
 Furthermore, because of Theorem \ref{3.3} and Corollary \ref{3.5} we have 
\[  
  H^d_I(R) \simeq H^d_{\mathfrak{m}}(R/Q)  
  \text{ and } H^d_{I_i}(R/Q_i)  \simeq H^d_I(R/Q_i)  \simeq H^d_{\mathfrak{m}}(R/Q_i), i = 1,\ldots,t. 
  \]
  Now by Matlis duality it turns out that 
  \[
  \End H^d_I(R) \simeq \End K_{R/Q} \text{ and } \Hom_R(H^d_{\mathfrak{m}}(R/Q_j), H^d_{\mathfrak{m}}(R/Q_i)) 
  \simeq \Hom_R(K_{R/Q_i}, K_{R/Q_j})
  \] 
  for all $i,j = 1,\ldots,t.$ Moreover we see that $ \Hom_R(K_{R/Q_i}, K_{R/Q_j}) = 0$ for $i \not= j$ because 
  \[
  \Ass_R \Hom_R(K_{R/Q_i}, K_{R/Q_j}) = \Ass_R K_{R/Q_j} \cap \Supp_R R/Q_i = \emptyset 
  \] 
  for all $i \not= j$ as follows by the definitions and  Proposition \ref{2.4}.  This implies the 
  decomposition 
  \[
  \End H^d_I(R)  \simeq \End H^d_{I_1}(R/Q_1) \times \ldots \times \End H^d_{I_t}(R/Q_t)
  \]
  because $\End K_{R/Q_i} \simeq \End H^d_{I_i}(R/Q_i), i = 1,\ldots,t,$ as follows again by Matlis 
  duality. By Theorem \ref{5.3} the endomorphism ring of $H^d_{I_i}(R/Q_i), i = 1,\ldots, t,$ is a
  local ring. So we get the decomposition as a direct product of rings and $\End H^d_I(R)$ is a 
  semi-local ring with $t$ as its number of maximal ideals. 
  \end{proof} 
  
{\bf Acknowledgement.} The authors are grateful to the reviewer for suggesting several improvements 
of the manuscript.



\begin{thebibliography}{9999}


\bibitem[1]{A}{Y. Aoyama}, {\it Some basic results on canonical modules}, J. Math. Kyoto Univ. {\bf 23} (1983)
85-94.

\bibitem[2]{A-G}{Y. Aoyama, S. Goto}, {\it On the endomorphism ring of the canonical module} , J. Math. Kyoto Univ. {\bf 25} (1985) 21-30.

\bibitem[3]{Br-Sh}{M. Brodmann and R.Y. Sharp}, {\it Local cohomology:
an algebraic introduction with geometric applications}, Cambridge Univ.
Press, {\bf 60}, Cambridge, 1998.

\bibitem[4]{D-Sch}{K. Divaani-Aazar and P. Schenzel}, {\it Ideal topologies, local cohomology and connectedness}, Math. Proc. Camb. Phil. Soc., {\bf 131} (2001) 211-226. 

\bibitem[5]{G}{A. Grothendieck}, {\it Local cohomology}, Notes by R. Hartshorne, Lect. Notes in Math., {\bf 20}, Springer, 1966.

\bibitem[6]{rH} {R.~Hartshorne},  {\it Complete intersections and
connectedness,} Amer. J. Math. {\bf 84} (1962), 497-508.


\bibitem[7]{H}{R. Hartshorne}, {\it Cohomological dimension of algebraic varieties}, Ann. Math. {\bf 88} (1968), 
403-450. 


\bibitem[8]{HH} {M.~Hochster, C.~Huneke}, {\sl Indecomposable canonical
modules and connectedness,} In: Commutative Algebra: Syzygies, Multiplicities, and Birational Algebra,  (Eds.:
W. Heinzer, C. Huneke, J. Sally), Contemporary Math. {\bf 159}
(1994), 197-208.



\bibitem[9]{Sch}{P. Schenzel},  {\it  On the use of local cohomology in algebra and geometry}, In: J. Elias, J.M. Giral, R.M. Mir\'o-Roig, S. Zarzuela (Eds.), Six Lectures in Commutative Algebra, Proceed. Summer School on Commutative Algebra at
Centre de Recerca Matemtica, Progr. Math., Vol. {\bf 166}, Birkh\"auser, 1998, 241-292.

\bibitem[10]{Sch2}{P. Schenzel},  {\it On formal local cohomology and connectedness}, J. Algebra, {\bf 315}  (2007), 894-923.

\bibitem[11]{Sch5}{P. Schenzel}, {\it On endomorphism rings and dimensions of local cohomology
modules}, Proc. Amer. Math. Soc. {\bf 137} (2009), 1315-1322.  

\bibitem[12]{Sch6}{P. Schenzel}, {\it Matlis duals of local cohomology modules and their endomorphism rings},
Archiv Math. {\bf 95} (2010), 115-123. 


\bibitem[13]{S1}{R. Y. Sharp}, {\it On the attached prime ideals of certain Artinian local cohomology 
modules}, Proc. Edinburgh Math. Soc. {\bf 24} (2) (1981), 9-14. 



\end{thebibliography}
\end{document}